\title{Monomial ideals with large projective dimension}
\author{Guillermo Alesandroni}
\address{Department of Mathematics, Wake Forest University, 1834 Wake Forest Rd, Winston-Salem, NC 27109}
\email{alesangc@wfu.edu}
\newtheorem{theorem}{Theorem}[section]
\newtheorem{corollary}[theorem]{Corollary}
\newtheorem{lemma}[theorem]{Lemma}
\theoremstyle{definition}
\newtheorem{definition}[theorem]{Definition}
\newtheorem{example}[theorem]{Example}
\newtheorem{construction}[theorem]{Construction}
\newtheorem{notation}[theorem]{Notation}
\DeclareMathOperator{\betti}{b}
\DeclareMathOperator{\pd}{pd}
\DeclareMathOperator{\mdeg}{mdeg}
\DeclareMathOperator{\lcm}{lcm}
\DeclareMathOperator{\hdeg}{hdeg}
\DeclareMathOperator{\rank}{rank}
\DeclareMathOperator{\codim}{codim}
\begin{document}
\maketitle

\begin{abstract}
This paper has the following main results. Let $S$ be a polynomial ring in $n$ variables, over an arbitrary field. Let $\mathscr{M}$ be the family of all monomial  ideals in $S$.
\begin{enumerate}[(i)]
 \item We give an explicit characterization of all $M\in \mathscr{M}$, such that $\pd(S/M)=n$.
 \item We give the total, graded, and multigraded Betti numbers of $S/M$, in homological degree $n$, for all $M\in \mathscr{M}$.
 \item Let $M\in \mathscr{M}$. If $\pd(S/M)=n$, then $\sum\limits_{i=0}^n \betti_i(S/M)\geq 2^n$.
 \item Let $M\in \mathscr{M}$. If $M$ is Artinian and $\betti_n(S/M)=1$, then $M$ is a complete intersection. 
 \end{enumerate}
 \end{abstract}

 \section{Introduction}
 Let $S=k[x_1,\ldots,x_n]$ be a polynomial ring in $n$ variables, over a field $k$. The title of this paper makes reference to those monomial ideals $M$ in $S$, for which the quotient module $S/M$ has projective dimension $n$, and the present work is entirely concerned with the study of such ideals.
 
 We begin to examine projective dimension $n$ in the context of squarefree monomial ideals. We show that the only squarefree monomial ideal $M$ for which the projective dimension of $S/M$ equals $n$ is the maximal ideal $M=(x_1,\ldots,x_n)$. This result turns out to be instrumental in the proof of a later theorem, where we characterize the class of all monomial ideals with large projective dimension. This characterization, in turn, is an avenue to three results that we discuss below.
 
 General consensus says that the problem of describing the Betti numbers of an arbitrary monomial ideal of $S$ is utopian. In homological degree $n$, however, such description is particularly simple. In fact, we give the total, graded, and multigraded Betti numbers of $S/M$, in homological degree $n$, for every monomial ideal $M$ of $S$.
 
 Another theorem proven in this article states that when the quotient $S/M$ has projective dimension $n$, the sum of its Betti numbers is at least $2^n$. This result, already known for Artinian monomial ideals [Ch, CE], is related to the Buchsbaum-Eisenbud, Horrocks conjecture, which has been investigated and generalized over the course of the years [CE], [PS, Conjectures 6.5, 6.6, and 6.7]. The proof of our theorem has strong combinatorial flavor.
 
 Finally, we show that when $M$ is Artinian and the $n^{th}$ Betti number of $S/M$ is 1, $M$ must be of the form $M=(x_1^{\alpha_1},\ldots,x_n^{\alpha_n})$, where the $\alpha_i$ are
 positive integers. Combining this result with [Pe, Theorem 25.7] (a criterion for $S/M$ to be Gorenstein), we obtain the following. If $\betti_n(S/M)=1$, then $S/M$ is Cohen-Macaulay if and only if $S/M$ is Gorenstein if and only if $M=(x_1^{\alpha_1},\ldots,x_n^{\alpha_n})$, for some ${\alpha_1},\ldots,{\alpha_n}\geq 1$.
 
 The organization of the article is as follows. Section 2 is about background and notation. Sections 3 and 4 prepare the ground to characterize all monomial ideals with large projective dimension. 
 This characterization is the content of section 5. Section 6 is the heart of this work; it is in this section that we prove the three theorems advertised above.
 
 \section{Background and Notation}%2
 Throughout this paper $k$ is an arbitrary field, and $S$ represents a polynomial ring over $k$, in a finite number variables. The letter $n$ is always used to denote the number of variables of $S$. The letter $M$ represents a monomial ideal
in $S$. With minor modifications, the construction that we give below can be found in [Me].
 
\begin{construction}
Let $M$ be generated by a set of monomials $\{l_1,\ldots,l_q\}$. For every subset $\{l_{i_1},\ldots,l_{i_s}\}$ of $\{l_1,\ldots,l_q\}$, with $1\leq i_1<\ldots<i_s\leq q$, 
we create a formal symbol $[l_{i_1},\ldots,l_{i_s}]$, called a \textbf{Taylor symbol}. The Taylor symbol associated to $\varnothing$ is denoted by $[\varnothing]$.
For each $s=0,\ldots,q$, set $F_s$ equal to the free $S$-module with basis $\{[l_{i_1},\ldots,l_{i_s}]:1\leq i_1<\ldots<i_s\leq q\}$ given by the 
${q\choose s}$ Taylor symbols corresponding to subsets of size $s$. That is, $F_s=\bigoplus\limits_{i_1<\ldots<i_s}S[l_{i_1},\ldots,l_{i_s}]$ 
(note that $F_0=S[\varnothing]$). Define
\[f_0:F_0\rightarrow S/M\]
\[s[\varnothing]\mapsto f_0(s[\varnothing])=s.\]
For $s=1,\ldots,q$, let $f_s:F_s\rightarrow F_{s-1}$ be given by
\[f_s\left([l_{i_1},\ldots,l_{i_s}]\right)=
 \sum\limits_{j=1}^s\dfrac{(-1)^{j+1}\lcm(l_{i_1},\ldots,l_{i_s})}{\lcm(l_{i_1},\ldots,\widehat{l_{i_j}},\ldots,l_{i_s})}
 [l_{i_1},\ldots,\widehat{l_{i_j}},\ldots,l_{i_s}]\]
 and extended by linearity.
 The \textbf{Taylor resolution} $\mathbb{T}_{l_1,\ldots,l_q}$ of $S/M$ is the exact sequence
 \[\mathbb{T}_{l_1,\ldots,l_q}:0\rightarrow F_q\xrightarrow{f_q}F_{q-1}\rightarrow\cdots\rightarrow F_1\xrightarrow{f_1}F_0\xrightarrow{f_0} 
 S/M\rightarrow0.\]
 \end{construction}
We define the \textbf{multidegree} of a Taylor symbol $[l_{i_1},\ldots,l_{i_s}]$, denoted $\mdeg[l_{i_1},\ldots,l_{i_s}]$, as follows:
  $\mdeg[l_{i_1},\ldots,l_{i_s}]=\lcm(l_{i_1},\ldots,l_{i_s})$. 
 The \textbf{degree} of a Taylor symbol $[l_{i_1},\ldots,l_{i_s}]$, denoted $\deg[l_{i_1},\ldots,l_{i_s}]$, is the total degree of the multidegree $\mdeg[l_{i_1},\ldots,l_{i_s}]$. For example, if 
 $\mdeg[l_{i_1},\ldots,l_{i_s}]=x^2y^3$, then $\deg[l_{i_1},\ldots,l_{i_s}]=5$.

  \textit{Note}:
  In our construction above, the generating set $\{l_1,\ldots,l_q\}$ is not required to be minimal. Thus, $S/M$ has many Taylor resolutions. We reserve the notation 
  $\mathbb{T}_M$ for the Taylor resolution of $S/M$, determined by the minimal generating set of $M$. (Although some authors define a single Taylor resolution of $S/M$, our construction is general, like in [Ei].)

\begin{definition}

 Let $M$ be a monomial ideal, and let
 \[\mathbb{F}:\cdots\rightarrow F_i\xrightarrow{f_i}F_{i-1}\rightarrow\cdots\rightarrow F_1\xrightarrow{f_1}F_0\xrightarrow{f_0} S/M\rightarrow 0\]
be a free resolution of $S/M$. 
We say that a basis element $[\sigma]$ of $\mathbb{F}$ has \textbf{homological degree i}, denoted $\hdeg[\sigma]=i$, if 
$[\sigma] \in F_i$. $\mathbb{F}$ is said to be a \textbf{minimal resolution} if for every $i$, the differential matrix $\left(f_i\right)$ of $\mathbb{F}$
has no invertible entries.
\end{definition}
\begin{definition}
Let $M$ be a monomial ideal, and let
 \[\mathbb{F}:\cdots\rightarrow F_i\xrightarrow{f_i}F_{i-1}\rightarrow\cdots\rightarrow F_1\xrightarrow{f_1}F_0\xrightarrow{f_0} S/M\rightarrow 0\]
be a minimal free resolution of $S/M$.
\begin{itemize}
 \item For every $i\geq 0$, the $i^{th}$ \textbf{Betti number} $\betti_i\left(S/M\right)$ of $S/M$ is $\betti_i\left(S/M\right)=\rank(F_i)$.
 \item For every $i,j\geq 0$, the \textbf{graded Betti number} $\betti_{i,j}\left(S/M\right)$ of $S/M$, in homological degree $i$ and internal degree $j$,
is \[\betti_{i,j}\left(S/M\right)=\#\{\text{basis elements }[\sigma]\text{ of }F_i:\deg[\sigma]=j\}.\]
\item For every $i\geq 0$, and every monomial $l$, the \textbf{multigraded Betti number} $\betti_{i,l}\left(S/M\right)$ of $S/M$, in homological degree $i$ and multidegree $l$,
is \[\betti_{i,l}\left(S/M\right)=\#\{\text{basis elements }[\sigma]\text{ of }F_i:\mdeg[\sigma]=l\}.\]
\item The \textbf{projective dimension} $\pd\left(S/M\right)$ of $S/M$ is \[\pd\left(S/M\right)=\max\{i:\betti_i\left(S/M\right)\neq 0\}.\]
\end{itemize}
\end{definition}

\begin{definition}
Let $L$ be a set of monomials, and let $M$ be a monomial ideal with minimal generating $G$.
\begin{itemize}
\item An element $m\in L$ is a \textbf{dominant monomial} (in $L$) if there is a variable $x$, such that for all $m'\in L\setminus\{m\}$, the exponent with 
which $x$ appears in the factorization of $m$ is larger than the exponent with which $x$ appears in the factorization of $m'$. In this case, we say that $m$ is dominant in $x$, and $x$ is a \textbf{dominant variable} for $m$.
\item $L$ is called a \textbf{dominant set} if each of its monomials is dominant. 
\item $M$ is called a \textbf{dominant ideal} if $G$ is a dominant set. 
\item If $G'$ is a dominant set contained in $G$, we will say that $G'$ is a \textbf{dominant subset} of $G$.  (This does not mean that the elements of $G'$ are dominant in $G$, as the concept of dominant monomial always depends on a reference set.)

\end{itemize}
\end{definition}

 \begin{example}\label{example 1}
  Let $M$ be minimally generated by $G=\{a^2b,ab^3c,bc^2,a^2c^2\}$, and let $G'=\{a^2b,ab^3c,bc^2\}$. Note that $ab^3c$ is the only dominant monomial in $G$, being $b$ a dominant variable for $ab^3c$. It is easy to check that
  $G'$ is a dominant set and, given that $G'\subseteq G$, $G'$ is a dominant subset of $G$. (Incidentally, notice that two of the dominant monomials in $G'$ are not dominant in $G$.) Finally, the ideal $M'$, minimally generated by $G'$, is a dominant ideal, for $G'$ is a dominant set. 
  \end{example}

 For a more detailed treatment of the concept of dominance, see [Al].
 
 \section{Auxiliary Results}% 1

 \textit{Note:} since the free modules of $\mathbb{T}_M$ are graded by multidegree, if $a=\alpha x_1^{\alpha_1}\ldots x_n^{\alpha_n}$, with $\alpha \in k \setminus \{0\}$, then 
 \[\mdeg(a[\sigma])=x_1^{\alpha_1}\ldots x_n^{\alpha_n} \mdeg[\sigma].\]
 
 \begin{theorem}\label{theorem 1.2}
 Let $M$ be a squarefree monomial ideal in $S=k[x_1,\ldots,x_n]$. If $\pd(S/M)=n$, then $M=(x_1,\ldots,x_n)$. 
 \end{theorem}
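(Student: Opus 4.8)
The plan is to reinterpret the hypothesis $\pd(S/M)=n$ homologically and then let squarefreeness collapse the situation. First I would apply the Auslander--Buchsbaum formula, $\pd(S/M)+\operatorname{depth}(S/M)=n$, so that $\pd(S/M)=n$ becomes $\operatorname{depth}(S/M)=0$, i.e. the graded maximal ideal $\mathfrak m=(x_1,\ldots,x_n)$ is an associated prime of $S/M$. For a monomial ideal this is equivalent to the existence of a \emph{monomial} $u\notin M$ with $(M:u)=\mathfrak m$; in other words, $u\notin M$ but $x_iu\in M$ for every $i$. (This standard fact about monomial ideals --- existence of a monomial socle representative --- is the only place where we leave elementary territory; if desired, the monomial $u$ can be extracted from a polynomial socle element $f$ using $(M:f)=\bigcap_j(M:u_j)$, the intersection over the monomials $u_j$ occurring in $f$.)

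The core of the proof is then a one-line divisibility argument. Put $v=\prod_{i\in\operatorname{supp}(u)}x_i$, the radical of $u$; since $v\mid u$ and $u\notin M$, we also have $v\notin M$. Suppose $\operatorname{supp}(u)\neq\varnothing$ and choose $i\in\operatorname{supp}(u)$. Then $x_iu\in M$, so some minimal generator $g$ of $M$ divides $x_iu$. Because $M$ is squarefree, $g$ is squarefree, and since $i\in\operatorname{supp}(u)$ we have $\operatorname{supp}(x_iu)=\operatorname{supp}(u)$; hence $\operatorname{supp}(g)\subseteq\operatorname{supp}(u)$, which gives $g\mid v$ and therefore $v\in M$ --- contradicting $v\notin M$. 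Thus $\operatorname{supp}(u)=\varnothing$, so $u$ is a nonzero constant; now $x_iu\in M$ forces $x_i\in M$ for all $i$, so $(x_1,\ldots,x_n)\subseteq M$, and since $u\notin M$ we have $M\neq S$. Therefore $M=(x_1,\ldots,x_n)$.

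The main obstacle is more a matter of which toolkit to invoke than of genuine difficulty: the substantive input is the passage from ``$\mathfrak m$ is associated to $S/M$'' to a monomial socle element, after which everything is immediate. If one prefers to stay within the Taylor-resolution framework of the previous section, the same result follows combinatorially: minimality makes every basis element in homological degree $n$ of the minimal resolution of $S/M$ have degree $\ge n$, while squarefreeness forces its multidegree to be squarefree, so $\betti_n(S/M)=\betti_{n,x_1\cdots x_n}(S/M)$; a Hochster-type computation then identifies $\operatorname{Tor}_n(S/M,k)_{x_1\cdots x_n}$ with $\widetilde H_{-1}$ of the Stanley--Reisner complex of $M$, which is nonzero precisely when that complex has no vertices, i.e. precisely when $x_i\in M$ for every $i$. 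I would present the first argument.
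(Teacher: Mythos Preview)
Your proof is correct and takes a genuinely different route from the paper. The paper stays entirely inside the Taylor resolution: it picks a basis element $[\sigma_n]$ in homological degree $n$ of a minimal resolution obtained from $\mathbb{T}_M$, and by following the differentials builds a chain $[\sigma_n],[\sigma_{n-1}],\ldots,[\sigma_1]$ with strictly decreasing degrees and divisible multidegrees. Squarefreeness bounds all degrees by $n$, so in fact $\deg[\sigma_i]=i$ for each $i$; an inductive reading of the Taylor symbols then forces $[\sigma_i]=[x_1,\ldots,x_i]$ (after relabeling), whence $x_1,\ldots,x_n\in G$.

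Your argument instead converts $\pd(S/M)=n$ into $\operatorname{depth}(S/M)=0$ via Auslander--Buchsbaum, extracts a monomial socle witness $u$, and uses squarefreeness to show $u$ must be a constant. This is considerably shorter and conceptually clean; the only nontrivial input is the existence of a \emph{monomial} $u$ with $(M:u)=\mathfrak m$, which you handle correctly. What the paper's approach buys, by contrast, is self-containment: it avoids Auslander--Buchsbaum entirely and works directly with the Taylor symbols and multidegrees, which is the language the rest of the paper is built on (the twin-ideal construction, the characterization theorems, and the consecutive-cancellation machinery all live in this framework). Your Hochster-formula sketch at the end is also valid and is closer in spirit to the paper's multigraded viewpoint, though it invokes more background than the paper uses anywhere.
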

 
 \begin{proof}
 Let $G$ be the minimal generating set of $M$. Let 
 \[\mathbb{F}: 0\rightarrow F_n\xrightarrow{f_n} F_{n-1}\cdots F_1\xrightarrow{f_1} F_0 \xrightarrow{f_0} S/M \rightarrow 0\]
 be a minimal resolution of $S/M$, obtained from $\mathbb{T}_M$ by means of consecutive cancellations. Let $[\theta]$ be a basis element of $F_n$ and let $f_n[\theta]=\sum a_i [\tau_i]$. By the minimality of  $\mathbb{F}$, none of the $a_i$ is invertible, and at least one of the $a_i$ is not zero, say $a_r\neq 0$. It follows that 
 $\mdeg[\theta]=\mdeg(a_r[\tau_r]) $. Let $[\sigma_n]=[\theta]$ and $[\sigma_{n-1}]=[\tau_r]$. Note that $\deg[\sigma_{n-1}]<\deg[\sigma_n]$, and $\mdeg[\sigma_{n-1}]\mid \mdeg[\sigma_n]$.\\
 Suppose $[\sigma_n],\ldots,[\sigma_{n-j}]$ are basis elements of $F_n,\ldots, F_{n-j}$, respectively, such that, for all $i=1,\ldots,j$, $\deg[\sigma_{n-i}]<\deg[\sigma_{n-i+1}]$, and $\mdeg[\sigma_{n-i}]\mid\mdeg[\sigma_{n-i+1}]$.\\ 
 Let $f_{n-j}[\sigma_{n-j}]=\sum \betti_i [\xi_i]$. By the minimality of $\mathbb{F}$, none of the $\betti_i$ is invertible, and at least one of the $\betti_i$ is not zero, say $\betti_s\neq 0$. It follows, $\mdeg[\sigma_{n-j}]=\mdeg(\betti_s[\xi_s])$.\\
 Let $[\sigma_{n-j-1}]=[\xi_s]$. Note that $\deg[\sigma_{n-j-1}]<\deg[\sigma_{n-j}]$, and $\mdeg[\sigma_{n-j-1}]\mid \mdeg[\sigma_{n-j}]$. Thus, we can recursively define a sequence $[\sigma_n],\ldots,[\sigma_1]$ of basis elements of $F_n,\ldots,F_1$, respectively, such that $1\leq \deg[\sigma_1]<\ldots<\deg[\sigma_n] \leq n$, and $\mdeg[\sigma_{n-i}]\mid\mdeg[\sigma_{n-i+1}]$, for all $i=1,\ldots,n-1$.
 Thus, we must have that $\deg[\sigma_i]=i$, for all $i=1,\ldots,n$. \\
 Since $\deg[\sigma_1]=1$, there is some variable, say $x_1$, such that $\mdeg[\sigma_1]=x_1$. By definition, $\mdeg[\sigma_1]$ is the least common multiple of a minimal generator. Thus, $x_1$ must be in the minimal generating set $G$ of $M$, and we must have $[\sigma_1]=[x_1]$. Since $\deg[\sigma_2]=2$, and $x_1=\mdeg[\sigma_1]\mid \mdeg[\sigma_2]$, there is some variable, say $x_2$, such that $\mdeg[\sigma_2]=x_1x_2$. By definition, $\mdeg[\sigma_2]$ is the least common multiple of two minimal generators, one of which must be divisible by $x_1$. But the only minimal generator that is divisible by $x_1$ is $x_1$ itself. Thus, $[\sigma_2]$ must be of the form $[\sigma_2]=[x_1,l]$. Since $l$ is not divisible by any of $x_3,\ldots,x_n$, the only possibility is $l=x_2$. Thus, $x_2$ must be in $G$, and we must have $[\sigma_2]=[x_1,x_2]$. Suppose that, for all $i=1,\ldots,j$, $x_i$ is in $G$, and $[\sigma_i]=[x_1,\ldots,x_i]$. Since $\deg[\sigma_{j+1}]=j+1$, and $x_1\ldots x_j=\mdeg[\sigma_j] \mid \mdeg[\sigma_{j+1}]$, there is some variable, say $x_{j+1}$, such that $\mdeg[\sigma_{j+1}]=x_1\ldots x_{j+1}$. By definition, $\mdeg[\sigma_{j+1}]$ is the least common multiple of $j+1$ minimal generators. Given that, for all $i=1,\ldots,j$, $x_i$ is the only minimal generator divisible by $x_i$, $[\sigma_{j+1}]$ must be of the form $[\sigma_{j+1}]=[x_1,\ldots,x_j,l]$. Therefore, $l$ must be divisible by $x_{j+1}$, and $l$ must not be divisible by any of $x_{j+2},\ldots,x_n$. Then, the only possibility is $l=x_{j+1}$. It follows that $x_{j+1}$ is in $G$, and $[\sigma_{j+1}]=[x_1,\ldots,x_{j+1}]$. We have proven that $x_1,\ldots,x_n \in G$, and $[\sigma_1]=[x_1],\ldots,[\sigma_n]=[x_1,\ldots,x_n]$. Hence, $M=(x_1,\ldots,x_n)$.   
 \end{proof}
 
 The notation that we introduce below retains its meaning until the end of this section.
 
 \begin{notation}
 Let $\mathbb{F}_0$ be a free resolution of $S/M$, obtained from $\mathbb{T}_M$ by means of consecutive cancellations. If $c_{\gamma_0 \delta_0}^{(0)}$ is an invertible entry of $\mathbb{F}_0$, determined by basis elements 
 $[\delta_0],[\gamma_0]$ of multidegree $l_0$ (cf. [Al, Remark 3.4]), let $\mathbb{F}_1$ be the resolution of $S/M$, such that
 \[\mathbb{F}_0=\mathbb{F}_1 \oplus \left(0 \rightarrow S[\delta_0] \rightarrow S[\gamma_0] \rightarrow 0 \right).\]
 Assume that $\mathbb{F}_{k-1}$ has been defined. If $c_{\gamma_{k-1} \delta_{k-1}}^{(k-1)}$ is an invertible entry of $\mathbb{F}_{k-1}$, determined by basis elements $[\delta_{k-1}]$, $[\gamma_{k-1}]$ of multidegree $l_{k-1}$, let $\mathbb{F}_k$ be the resolution of $S/M$, such that 
 \[\mathbb{F}_{k-1}=\mathbb{F}_k \oplus \left(0 \rightarrow S[\delta_{k-1}] \rightarrow S[\gamma_{k-1}] \rightarrow 0 \right).\]
 \end{notation}
 
 \begin{theorem}\label{1}
 Suppose that $\mathbb{F}_0,\ldots,\mathbb{F}_v$ are resolutions of $S/M$, defined as above. If $c_{\pi \theta}^{(0)}$ is a noninvertible entry of $\mathbb{F}_0$, determined by basis elements $[\theta]$, $[\pi]$ of multidegree $l$, and for all $0\leq i\leq v$, $l \neq l_i$, then the entry $c_{\pi \theta}^{(v)}$ of $\mathbb{F}_v$, determined by $[\theta]$, $[\pi]$ is noninvertible.
 \end{theorem}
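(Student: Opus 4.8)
The plan is to induct on $v$, so that the whole statement reduces to controlling a single consecutive cancellation. For $v=0$ there is nothing to prove. Assume the statement for $v-1$. Since the hypothesis gives $l\neq l_i$ for $0\leq i\leq v-1$ in particular, the inductive hypothesis tells us the entry $c^{(v-1)}_{\pi\theta}$ of $\mathbb{F}_{v-1}$ determined by $[\theta],[\pi]$ is noninvertible, and we must deduce that $c^{(v)}_{\pi\theta}$ is noninvertible, where $\mathbb{F}_v$ is obtained from $\mathbb{F}_{v-1}$ by cancelling the invertible entry $u:=c^{(v-1)}_{\gamma_{v-1}\delta_{v-1}}$, with $[\delta_{v-1}],[\gamma_{v-1}]$ of multidegree $l_{v-1}$ and $l_{v-1}\neq l$.

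I would first record the structural features of consecutive cancellations (cf. [Al, Remark 3.4]): each $\mathbb{F}_k$ is a multigraded complex, every basis element carries a well-defined multidegree, a cancelled pair $[\delta_{k-1}],[\gamma_{k-1}]$ always shares a single multidegree $(=l_{k-1})$, and every basis element that survives a cancellation keeps the multidegree and the homological degree it had beforehand. In particular, because $\mdeg[\theta]=\mdeg[\pi]=l\neq l_{v-1}$, neither $[\theta]$ nor $[\pi]$ belongs to the cancelled pair, so both persist in $\mathbb{F}_v$ with multidegree $l$ and with their original homological degrees, say $i$ and $i-1$; thus $c^{(v)}_{\pi\theta}$ really is an entry of $\mathbb{F}_v$. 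Multigradedness also supplies the divisibility rule I will use: if $c^{(v-1)}_{\beta\alpha}\neq 0$, then $\mdeg[\beta]\mid\mdeg[\alpha]$.

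Next I would spell out how the differential changes under this one cancellation. Carrying out the change of basis only in homological degree $j:=\hdeg[\delta_{v-1}]$, the decomposition $\mathbb{F}_{v-1}=\mathbb{F}_v\oplus\left(0\to S[\delta_{v-1}]\to S[\gamma_{v-1}]\to 0\right)$ fixes every differential entry of $\mathbb{F}_{v-1}$ except those in the block $(F_{v-1})_j\to(F_{v-1})_{j-1}$, where, for surviving basis elements $[\alpha]$ in degree $j$ and $[\beta]$ in degree $j-1$, $c^{(v)}_{\beta\alpha}=c^{(v-1)}_{\beta\alpha}-u^{-1}c^{(v-1)}_{\gamma_{v-1}\alpha}c^{(v-1)}_{\beta\delta_{v-1}}$. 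Checking this formula — and especially that the neighbouring blocks $(F_{v-1})_{j+1}\to(F_{v-1})_j$ and $(F_{v-1})_{j-1}\to(F_{v-1})_{j-2}$ are genuinely unchanged, which leans on $f\circ f=0$ — is the routine but somewhat delicate bookkeeping of Gaussian elimination on complexes, and is the part I expect to demand the most care. Accepting it, $c^{(v)}_{\pi\theta}=c^{(v-1)}_{\pi\theta}$ unless $i=j$ (so that $[\theta]\in(F_{v-1})_j$ and $[\pi]\in(F_{v-1})_{j-1}$), and when $i\neq j$ we are already done by the inductive hypothesis.

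Finally, in the remaining case $c^{(v)}_{\pi\theta}=c^{(v-1)}_{\pi\theta}-u^{-1}c^{(v-1)}_{\gamma_{v-1}\theta}c^{(v-1)}_{\pi\delta_{v-1}}$, and I claim the correction term vanishes. If not, then $c^{(v-1)}_{\gamma_{v-1}\theta}\neq 0$ and $c^{(v-1)}_{\pi\delta_{v-1}}\neq 0$; by the divisibility rule the first forces $l_{v-1}=\mdeg[\gamma_{v-1}]\mid\mdeg[\theta]=l$ and the second forces $l=\mdeg[\pi]\mid\mdeg[\delta_{v-1}]=l_{v-1}$, so $l=l_{v-1}$, contradicting the hypothesis. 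Hence $c^{(v)}_{\pi\theta}=c^{(v-1)}_{\pi\theta}$, which is noninvertible by the inductive hypothesis, and the induction is complete.
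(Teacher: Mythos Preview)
Your proof is correct and follows essentially the same route as the paper: induction on $v$, case split on whether $\hdeg[\theta]=\hdeg[\delta_{v-1}]$, and use of the cancellation formula from [Al, Lemma~3.2]. The only difference is cosmetic: in the case $\hdeg[\theta]=\hdeg[\delta_{v-1}]$ the paper argues that $c^{(v-1)}_{\pi\delta_{v-1}}$ is noninvertible (since $\mdeg[\pi]\neq\mdeg[\delta_{v-1}]$) and then that a difference of noninvertible terms is noninvertible, whereas you go one step further and observe, via divisibility in both directions, that the correction term is actually zero.
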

 
 \begin{proof}
 By induction on $v$. If $v=0$, there is nothing to prove.\\
 Let us assume that the statement holds for $v-1$. If $\hdeg[\theta]\neq \hdeg[\delta_{v-1}]$, then $c_{\pi \theta}^{(v)}= c_{\pi \theta}^{(v-1)}$, by [Al, Lemma 3.2(iv)], and the result holds by induction hypothesis.\\
 If $\hdeg[\theta]=\hdeg[\delta_{v-1}]$, then
 \[c_{\pi \theta}^{(v)}=c_{\pi \theta}^{(v-1)}- \dfrac{c_{\pi \delta_{v-1}}^{(v-1)} c_{\gamma_{v-1} \theta}^{(v-1)}}{c_{\gamma_{v-1} \delta_{v-1}}^{(v-1)}},\]
 by [Al, Lemma 3.2(iii)]. By induction hypothesis, $c_{\pi \theta}^{(v-1)}$ is noninvertible. Also, since $\mdeg[\pi]=l \neq l_{v-1}=\mdeg[\delta_{v-1}]$, the entry $c_{\pi \delta_{v-1}}^{(v-1)}$ is noninvertible. Hence, the product 
 $c_{\pi \delta_{v-1}}^{(v-1)} c_{\gamma_{v-1} \theta}^{(v-1)}$ is noninvertible. Since $c_{\gamma_{v-1} \delta_{v-1}}^{(v-1)}$ is invertible, the quotient $ \dfrac{c_{\pi \delta_{v-1}}^{(v-1)} c_{\gamma_{v-1} \theta}^{(v-1)}}{c_{\gamma_{v-1} \delta_{v-1}}^{(v-1)}}$ is noninvertible. Finally, $c_{\pi \theta}^{(v)}$ must be noninvertible, for the difference of two noninvertible monomials is noninvertible.
 \end{proof}
 
 \begin{corollary}\label{corollary 1.5}
 Let $\mathbb{F}$ be a free resolution of $S/M$, obtained from $\mathbb{T}_M$ by means of consecutive cancellations. Suppose that $l$ is a multidegree, such that every entry $c_{\pi \theta}$ of $\mathbb{F}$, determined by basis elements $[\pi]$, $[\theta]$ of multidegree $l$, is noninvertible. Then, for all $i \geq 0$,
 \[\betti_{i,l}(S/M)= \# \{[\sigma] \in \mathbb{F}: \mdeg[\sigma]=l, \hdeg[\sigma]=i\}.\] 
 \end{corollary}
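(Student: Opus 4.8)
The plan is to push the consecutive-cancellation process past $\mathbb{F}$ all the way to a minimal free resolution of $S/M$, and to show that multidegree $l$ is never used in any of those cancellations. Granting that, the conclusion is pure bookkeeping: a cancellation carried out in a multidegree $l'\neq l$ deletes one basis element from each of two consecutive free modules without disturbing any basis element of multidegree $l$, so the quantity $\#\{[\sigma]:\mdeg[\sigma]=l,\ \hdeg[\sigma]=i\}$ is the same in $\mathbb{F}$ as in the terminal minimal resolution; and in a minimal free resolution that quantity is precisely $\betti_{i,l}(S/M)$ by the definition of multigraded Betti number. Thus the whole corollary reduces to the claim that no cancellation ever involves two basis elements of multidegree $l$.

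To prove that claim, set $\mathbb{F}_0=\mathbb{F}$ and let $\mathbb{F}_0,\mathbb{F}_1,\ldots,\mathbb{F}_w$ be a chain of resolutions formed as in the Notation that cannot be extended; since each step strictly lowers ranks the chain is finite, and $\mathbb{F}_w$, having no invertible entries, is a minimal free resolution of $S/M$. Suppose for contradiction that some $l_j=l$, and take $j$ minimal with this property; then $\mathbb{F}_{j+1}$ is obtained from $\mathbb{F}_j$ by cancelling an invertible entry $c^{(j)}_{\gamma_j\delta_j}$ of $\mathbb{F}_j$ whose basis elements $[\gamma_j],[\delta_j]$ have multidegree $l$, while $l_0,\ldots,l_{j-1}$ are all different from $l$. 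Because a cancellation only ever deletes basis elements (never creates them) and none of the first $j$ cancellations uses multidegree $l$, the symbols $[\gamma_j],[\delta_j]$ are already basis elements of $\mathbb{F}_0$, and by hypothesis every entry of $\mathbb{F}_0$ between two basis elements of multidegree $l$ is noninvertible. Now apply Theorem \ref{1} to the chain $\mathbb{F}_0,\ldots,\mathbb{F}_j$, whose cancellations use only the multidegrees $l_0,\ldots,l_{j-1}$, each distinct from $l$: the entry $c^{(j)}_{\gamma_j\delta_j}$ of $\mathbb{F}_j$ is noninvertible, contradicting the fact that it is the invertible entry being cancelled. Hence no $l_j$ equals $l$.

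Finally I would close with the bookkeeping outlined above: passing from $\mathbb{F}=\mathbb{F}_0$ down to $\mathbb{F}_w$, each step removes a pair of basis elements in some multidegree $\neq l$, so for every $i$ the set $\{[\sigma]\in\mathbb{F}_w:\mdeg[\sigma]=l,\ \hdeg[\sigma]=i\}$ has the same cardinality as $\{[\sigma]\in\mathbb{F}:\mdeg[\sigma]=l,\ \hdeg[\sigma]=i\}$; since $\mathbb{F}_w$ is minimal and multigraded, the former cardinality is $\betti_{i,l}(S/M)$, which gives the asserted equality. The one delicate point I anticipate is the clean invocation of Theorem \ref{1}: one must make sure the chain $\mathbb{F}_0,\ldots,\mathbb{F}_j$ literally meets its hypotheses — in particular that $[\gamma_j]$ and $[\delta_j]$ persist unchanged from $\mathbb{F}_0$ and that the multidegrees appearing in the first $j$ cancellations are exactly $l_0,\ldots,l_{j-1}$ — so that the theorem's conclusion ``$c^{(j)}_{\gamma_j\delta_j}$ is noninvertible'' is precisely the statement needed. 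Everything else is routine.
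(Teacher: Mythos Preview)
Your proof is correct and follows essentially the same path as the paper's: extend the chain of consecutive cancellations from $\mathbb{F}=\mathbb{F}_0$ to a minimal resolution, take the least index at which a cancellation occurs in multidegree $l$, and use Theorem~\ref{1} to derive a contradiction with the noninvertibility hypothesis. The only cosmetic difference is that you spell out the persistence of $[\gamma_j],[\delta_j]$ in $\mathbb{F}_0$ more explicitly than the paper does.
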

 
 \begin{proof}
 Since the process of making consecutive cancellations must eventually terminate, there is a number $v\geq 0$, such that $\mathbb{F}_0=\mathbb{F},\mathbb{F}_1,\ldots,\mathbb{F}_v$ are resolutions of $S/M$ defined as above, and $\mathbb{F}_v$ is a minimal resolution. That is:
 
\[ \begin{array}{rcl}
 \mathbb{F}  =\mathbb{F}_0 & = &\mathbb{F}_1 \oplus \left(0\rightarrow S[\delta_0]\rightarrow S[\gamma_0]\rightarrow 0 \right) \\
  \mathbb{F}_1 & =  &\mathbb{F}_2 \oplus \left(0\rightarrow S[\delta_1]\rightarrow S[\gamma_1]\rightarrow 0 \right) \\
                           & \vdots \\
  \mathbb{F}_{v-1} &=&\mathbb{F}_v     \oplus \left(0\rightarrow S[\delta_{v-1}]\rightarrow S[\gamma_{v-1}]\rightarrow 0 \right),                     
 \end{array}\] 
 where $\mathbb{F}_v$ is a minimal resolution of $S/M$. Suppose, by means of contradiction, that at least one of $[\delta_0],\ldots,[\delta_{v-1}]$ has multidegree $l$. Let $i$ be the smallest integer such that $\mdeg[\delta_i]=l$.
 Since $[\delta_i]$, $[\gamma_i]$ have multidegree $l$, the entry $c_{\gamma_i \delta_i}^{(0)}$ of $\mathbb{F}_0=\mathbb{F}$ is noninvertible by hypothesis. Since the multidegrees of $[\delta_0],\ldots,[\delta _{i-1}]$ are not equal to $l$, it follows from Theorem \ref{1} that the entry $c_{\gamma_i \delta_i}^{(i)}$ of $\mathbb{F}_i$, determined by $[\delta_i]$, $[\gamma_i]$, is noninvertible. However, the fact that $\mathbb{F}_{i+1}$ is obtained from $\mathbb{F}_i$ by doing the cancellation $0\rightarrow S[\delta_i]\rightarrow S[\gamma_i]\rightarrow 0$, implies that $c_{\gamma_i \delta_i}^{(i)}$ is invertible, a contradiction.\\
 We have proven that the multidegrees of $[\delta_0],\ldots,[\delta_{v-1}]$ are not equal to $l$, and hence, the multidegrees of $[\gamma_0],\ldots,[\gamma_{v-1}]$ are not equal to $l$. Thus, the basis of the minimal resolution $\mathbb{F}_v$ is obtained from the basis of $\mathbb{F}_0=\mathbb{F}$ by removing basis elements of multidegree not equal to $l$. Then the basis elements of $\mathbb{F}_0=\mathbb{F}$, with multidegree $l$, are the same as the basis elements of $\mathbb{F}_v$, with multidegree $l$.
 \end{proof}
 
 \section{Isomorphism Theorems}% 2
 
 \begin{construction}\label{const 1}
 Let $M=(m_1,\ldots,m_q)$, where $m_i=x_1^{\alpha_{i1}}\ldots x_n^{\alpha_{in}}$, for all $i=1,\ldots,q$. Let $m=\lcm(m_1,\ldots,m_q)$. Then $m$ factors as $m=x_1^{\alpha_1} \ldots x_n^{\alpha_n}$, where 
 $\alpha_j=\max(\alpha_{1j}, \ldots, \alpha_{qj} )$, for all $j=1,\ldots,n$. For all $i=1,\ldots,q$, we define the monomial $m'_i$ as follows:
 \[m'_i=x_1^{\beta_{i1}} \ldots x_n^{\beta_{in}} \text{, where } \beta_{ij}= \begin{cases} 
 														\alpha_j \text{, if } \alpha_{ij}=\alpha_j \\
														0 \text{, otherwise.}
														\end{cases}\]
Let $M'=(m'_1,\ldots,m'_q)$. The ideal $M'$ will be referred to as the \textbf{twin ideal} of $M$.
 \end{construction}
 The notation introduced in Construction \ref{const 1} retains its meaning until the end of this section.
 
 \begin{example}
 Let $M=(m_1=a^3b^2,m_2=a^3c,m_3=ac^2,m_4=bc^2)$. Then $m=a^3b^2c^2$, and $M'=(m'_1=a^3b^2,m'_2=a^3,m'_3=c^2,m'_4=c^2)$.
 \end{example}
 Note that $M'$ is not minimally generated by $\{m'_1,m'_2,m'_3,m'_4\}$ because some generators of this set are redundant; moreover, some monomials are duplicated. However, nonminimal generating sets will play an important role in this section.
 
 \begin{theorem} \label{theorem AiBi}
 For every $i\geq 0$, there is a bijective correspondence between the basis elements of $\mathbb{T}_{m'_1,\ldots, m'_q}$ with multidegree $m$ and homological degree $i$, and the basis elements of $\mathbb{T}_M$ with multidegree $m$ and homological degree $i$.
 \end{theorem}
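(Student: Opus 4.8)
The plan is to write down the bijection explicitly. Recall that a Taylor resolution $\mathbb{T}_{l_1,\ldots,l_q}$ has, in each homological degree $i$, exactly one basis element $[l_{r_1},\ldots,l_{r_i}]$ for every $i$-element subset $\{r_1<\cdots<r_i\}$ of the index set $\{1,\ldots,q\}$, regardless of whether some of the monomials $l_r$ happen to coincide. Hence the homological-degree-$i$ bases of $\mathbb{T}_M=\mathbb{T}_{m_1,\ldots,m_q}$ and of $\mathbb{T}_{m'_1,\ldots,m'_q}$ are both indexed by the same family of subsets of $\{1,\ldots,q\}$. I would define a map $\Phi$ by $[m_{r_1},\ldots,m_{r_i}]\mapsto[m'_{r_1},\ldots,m'_{r_i}]$; this is a homological-degree-preserving bijection between the full bases, and the theorem reduces to showing that $\Phi$ carries the symbols of multidegree $m$ onto the symbols of multidegree $m$.

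So it remains to prove the equivalence
\[
\lcm(m_{r_1},\ldots,m_{r_i})=m\iff\lcm(m'_{r_1},\ldots,m'_{r_i})=m
\]
for every subset $\{r_1<\cdots<r_i\}$ of $\{1,\ldots,q\}$, and I would do this by comparing exponents one variable at a time. Since $\alpha_k=\max(\alpha_{1k},\ldots,\alpha_{qk})$, every $\alpha_{rk}$ and every $\beta_{rk}$ is at most $\alpha_k$, so both least common multiples divide $m$; thus each side of the displayed equivalence says precisely that, for every $k\in\{1,\ldots,n\}$, the $x_k$-exponent of the relevant $\lcm$ equals $\alpha_k$. On the left that exponent is $\max_t\alpha_{r_t k}$, which equals $\alpha_k$ iff $\alpha_{r_t k}=\alpha_k$ for some $t$. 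On the right it is $\max_t\beta_{r_t k}$; by Construction~\ref{const 1}, $\beta_{rk}$ is $\alpha_k$ when $\alpha_{rk}=\alpha_k$ and $0$ otherwise, so $\max_t\beta_{r_t k}=\alpha_k$ iff again $\alpha_{r_t k}=\alpha_k$ for some $t$. Both sides of the displayed equivalence are therefore equivalent to the single condition ``for every $k$ there is a $t$ with $\alpha_{r_t k}=\alpha_k$'' (a condition that is vacuous for any $k$ with $\alpha_k=0$), and the equivalence follows.

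Putting the pieces together: $\Phi$ is a homological-degree-preserving bijection on bases, and $[m_{r_1},\ldots,m_{r_i}]$ has multidegree $m$ if and only if its image $[m'_{r_1},\ldots,m'_{r_i}]$ does; so the restriction of $\Phi$ to the multidegree-$m$ symbols is the required correspondence. I do not anticipate a genuine difficulty; the argument is pure bookkeeping, and the only point that needs a word of care is the non-minimality of $\{m'_1,\ldots,m'_q\}$ — two of these monomials can even be equal, as in the example following Construction~\ref{const 1} — so one must keep in mind that the basis of a Taylor resolution is indexed by subsets of the list of indices rather than by the set of distinct monomials occurring in it, which is precisely what makes $\Phi$ a bijection.
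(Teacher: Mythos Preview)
Your proof is correct and follows essentially the same approach as the paper. The only cosmetic differences are that the paper defines the map in the opposite direction, $f_i:[m'_{r_1},\ldots,m'_{r_i}]\mapsto[m_{r_1},\ldots,m_{r_i}]$, directly on the multidegree-$m$ symbols (checking well-definedness, injectivity, and surjectivity separately), whereas you define $\Phi$ on the full bases and then restrict; and the paper handles one implication via the divisibility $m'_r\mid m_r$ rather than the exponent comparison, but the substance of the argument is identical.
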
 
 
 \begin{proof}
 Let $A_i=\{[\sigma]\in \mathbb{T}_{m'_1,\ldots,m'_q}: \hdeg[\sigma]=i \text{ and } \mdeg[\sigma]=m\}$. Let $B_i=\{[\sigma]\in \mathbb{T}_M:\hdeg[\sigma]=i \text{ and } \mdeg[\sigma]=m\}$.\\
 Let $f_i: A_i\rightarrow B_i$ be defined by $f_i[m'_{r_1},\ldots,m'_{r_i}]=[m_{r_1},\ldots,m_{r_i}]$.\\
 \begin{itemize}
 \item $f_i$ is well defined.\\
 If $\mdeg[m'_{r_1},\ldots,m'_{r_i}]=m$, then $m \mid \mdeg[m_{r_1},\ldots,m_{r_i}]$, for each $m_{r_k}$ is a multiple of $m'_{r_k}$. On the other hand, $\mdeg[m_{r_1},\ldots,m_{r_i}]\mid \lcm(m_1,\ldots,m_q)=m$. Thus, $\mdeg[m_{r_1},\ldots,m_{r_i}]=m$.
 \item $f_i$ is one-to-one.\\
 If If $[m'_{r_1},\ldots,m'_{r_i}]$, $[m'_{s_1},\ldots,m'_{s_i}]$ are different basis elements of $A_i$, then $r_k \neq s_k$, for some $k$. Therefore, $[m_{r_1},\ldots,m_{r_i}] \neq [m_{s_1},\ldots,m_{s_i}]$. 
 \item $f_i$ is onto.\\
 Let $[m_{r_1},\ldots,m_{r_i}]\in B_i$. Then $\mdeg[m_{r_1},\ldots,m_{r_i}]=m=x_1^{\alpha_1}\ldots x_n^{\alpha_n}$. Hence, there is some 
 $m_r \in \{m_{r_1},\ldots,m_{r_i}\}$, such that $m_r=x_1^{\alpha_{r1}} \ldots x_n^{\alpha_{rn}}$, with $\alpha_{r_1}=\alpha_1$. Thus, $m'_r=x_1^{\beta_{r1}}\ldots x_n^{\beta_{rn}}$, with $\beta_{r1}=\alpha_1$. This means that $x_1^{\alpha_1} \mid m'_r$, and therefore, $x_1^{\alpha_1}\mid \mdeg[m'_{r_1},\ldots,m'_{r_i}]$. Similarly, $x_2^{\alpha_2},\ldots,x_n^{\alpha_n}\mid \mdeg[m'_{r_1},\ldots,m'_{r_i}]$. It follows that $m\mid \mdeg[m'_{r_1},\ldots,m'_{r_i}]$. Since $\mdeg[m'_{r_1},\ldots,m'_{r_i}]\mid m$, we conclude that $\mdeg[m'_{r_1},\ldots,m'_{r_i}]=m$, and $[m'_{r_1},\ldots,m'_{r_i}] \in A_i$. 
 \end{itemize}  
 \end{proof}
 
 \begin{notation}
 Let $A= \bigcup A_i$, and $B= \bigcup B_i$, where $A_i$, $B_i$ are the sets defined in Theorem \ref{theorem AiBi}. Let $f:A\rightarrow B$ be given by $f([\sigma])=f_i[\sigma]$, if $[\sigma] \in A_i$, where $f_i$ is the map defined in Theorem \ref{theorem AiBi}. Note that $A$ and $B$ are the sets of all basis elements with multidegree $m$ in $\mathbb{T}_{m'_1,\ldots,m'_q}$ and $\mathbb{T}_M$, respectively, and $f:A\rightarrow B$ is a bijection that sends elements with multidegree $m$ and homological degree $i$ to elements with multidegree $m$ and homological degree $i$.
 \end{notation}
 
 \begin{theorem}\label{4}
 If $a_{\pi \theta}$ is an entry of $\mathbb{T}_{m'_1,\ldots,m'_q}$, determined by elements $[\theta],[\pi] \in A$, then $f[\theta]$, $f[\pi]$ determine an entry $b_{\pi \theta}$ of $\mathbb{T}_M$, such that 
 $b_{\pi \theta}=a_{\pi \theta}$.
 \end{theorem}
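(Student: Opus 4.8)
The plan is to read both entries off directly from the Taylor differential and to observe that, because every basis element lying in $A$ or in $B$ has multidegree exactly $m$, all the least common multiples that occur collapse to $m$, so that $a_{\pi\theta}$ and $b_{\pi\theta}$ reduce to one and the same sign.

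First I would recall the shape of a Taylor differential. If $[\theta]=[m'_{r_1},\ldots,m'_{r_i}]$ is a basis element of $\mathbb{T}_{m'_1,\ldots,m'_q}$, then the entry $a_{\pi\theta}$ is nonzero only when $[\pi]$ is a codimension-one face of $[\theta]$, that is, $[\pi]=[m'_{r_1},\ldots,\widehat{m'_{r_j}},\ldots,m'_{r_i}]$ for some $j$, and in that case $a_{\pi\theta}=(-1)^{j+1}\lcm(m'_{r_1},\ldots,m'_{r_i})/\lcm(m'_{r_1},\ldots,\widehat{m'_{r_j}},\ldots,m'_{r_i})=(-1)^{j+1}\,\mdeg[\theta]/\mdeg[\pi]$; the analogous description holds in $\mathbb{T}_M$ for $f[\theta]=[m_{r_1},\ldots,m_{r_i}]$ and its faces. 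Next I would record the compatibility that makes everything line up: $f$ preserves index sets, $f[m'_{r_1},\ldots,m'_{r_i}]=[m_{r_1},\ldots,m_{r_i}]$, so $[\pi]$ is the $j$-th face of $[\theta]$ if and only if $f[\pi]$ is the $j$-th face of $f[\theta]$, with the \emph{same} position $j$ and hence the same sign $(-1)^{j+1}$. If $[\pi]$ is not a face of $[\theta]$ at all, then $a_{\pi\theta}=0=b_{\pi\theta}$ and there is nothing more to prove.

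In the remaining case, write $[\pi]=[m'_{r_1},\ldots,\widehat{m'_{r_j}},\ldots,m'_{r_i}]$. Since $[\theta]$ and $[\pi]$ lie in $A$, both have multidegree $m$, so $a_{\pi\theta}=(-1)^{j+1}\,m/m=(-1)^{j+1}$. On the target side, $f[\theta]$ and $f[\pi]$ lie in $B$, so by Theorem \ref{theorem AiBi} (and the definition of $B$) both have multidegree $m$ as well, whence $b_{\pi\theta}=(-1)^{j+1}\,m/m=(-1)^{j+1}=a_{\pi\theta}$.

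The only delicate point, which I regard as the main though minor obstacle, is the bookkeeping forced by the fact that $\{m'_1,\ldots,m'_q\}$ is a nonminimal generating set in which distinct $m'_i$ may coincide: one must make sure that Taylor symbols, and hence the matrix entries $a_{\pi\theta}$, are understood as indexed by subsets of $\{1,\ldots,q\}$ rather than by multisets of monomials, so that $f$ is a genuine bijection matching faces to faces and preserving the position $j$ of the deleted generator. Granting that, the computation above is immediate; in fact it shows a little more, namely that the two Taylor differentials agree on the entire block of basis elements of multidegree $m$, which is the form in which the result will be used in the next section.
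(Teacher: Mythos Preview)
Your proof is correct and follows essentially the same route as the paper's: both arguments split into the facet/non-facet cases, observe that $f$ preserves the index set (hence the position $j$ and the sign $(-1)^{j+1}$), and then reduce the nonzero entry to $(-1)^{j+1}\,m/m$ on each side because all four multidegrees equal $m$. Your explicit caveat about indexing Taylor symbols by subsets of $\{1,\ldots,q\}$ rather than by monomials is a point the paper leaves implicit, but otherwise the two proofs are the same.
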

 
 \begin{proof}
 Since $[\theta],[\pi]$ appear in consecutive homological degrees, so do $f[\theta],f[\pi]$. Thus, $f[\theta],f[\pi]$ determine an entry $b_{\pi \theta}$ of $\mathbb{T}_M$. If $[\pi]$ is a facet of $[\theta]$, then $f[\pi]$ is a facet of $f[\theta]$, and these elements are of the form 
 \begin{align*}
 [\theta] &=[m'_{r_1},\ldots,m'_{r_i}]; & [\pi]=[m'_{r_1},\ldots,\widehat{m'_{r_t}},\ldots,m'_{r_i}]\\
 f[\theta] &=[m_{r_1},\ldots,m_{r_i}];  &f[\pi]=[m_{r_1},\ldots, \widehat{m_{r_t}},\ldots,m_{r_i}]
 \end{align*}
 Thus,
 \[b_{\pi \theta}=(-1)^{t+1}\dfrac{\mdeg f[\theta]}{\mdeg f[\pi]}=(-1)^{t+1}\dfrac{m}{m}=(-1)^{t+1}\dfrac{\mdeg [\theta]}{\mdeg[\pi]}=a_{\pi \theta}.\]
 On the other hand, if $[\pi]$ is not a facet of $[\theta]$, $f[\pi]$ cannot be a facet of $f[\theta]$. Thus, $b_{\pi \theta}=0=a_{\pi \theta}$.
 \end{proof}
 
 \begin{notation}
 Let $\mathbb{F}_0=\mathbb{T}_{m'_1,\ldots,m'_q}$. If there is an invertible entry $a_{\pi_0 \theta_0}^{(0)}$ of $\mathbb{F}_0$, determined by elements $[\theta_0], [\pi_0] \in A$, let $\mathbb{F}_1$ be the resolution of $S/M'$ such that
 \[\mathbb{F}_0=\mathbb{F}_1 \oplus \left(0\rightarrow S[\theta_0] \rightarrow S[\pi_0] \rightarrow 0 \right). \]
 Let us assume that $\mathbb{F}_{k-1}$ has been defined. \\
 If there is an invertible entry  $a_{\pi_{k-1}\theta_{k-1}}^{(k-1)}$ of $\mathbb{F}_{k-1}$, determined by elements $[\theta_{k-1}],[\pi_{k-1}]$ of $A$, let $\mathbb{F}_k$ be the resolution of $S/M'$ such that
 \[\mathbb{F}_{k-1}=\mathbb{F}_k \oplus \left(0 \rightarrow S[\theta_{k-1}] \rightarrow S[\pi_{k-1}] \rightarrow 0 \right).\]
 \end{notation}
 
 \begin{theorem}\label{theorem exist}
 Suppose that $\mathbb{F}_0,\ldots\mathbb{F}_u$ are resolutions of $S/M'$, defined as above. Then
 \begin{enumerate}[(i)]
 \item There exist resolutions $\mathbb{G}_0\ldots,\mathbb{G}_u$ of $S/M$, defined as follows
 \[\mathbb{G}_0=\mathbb{T}_M; \mathbb{G}_{k-1}=\mathbb{G}_k \oplus \left(0\rightarrow Sf[\theta_{k-1}]\rightarrow Sf[\pi_{k-1}] \rightarrow 0\right).\]
 \item If $a_{\tau \sigma}^{(u)}$ is an entry of $\mathbb{F}_u$, determined by elements $[\sigma],[\tau] \in A$, then $f[\sigma],f[\tau]$ are in the basis of $\mathbb{G}_u$ and determine an entry $b_{\tau\sigma}^{(u)}$ of 
 $\mathbb{G}_u$, such that $b_{\tau\sigma}^{(u)}=a_{\tau\sigma}^{(u)}$.
 \end{enumerate}
 \end{theorem}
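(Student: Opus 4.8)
The plan is to prove (i) and (ii) simultaneously by induction on $u$, using the bijection $f$ together with Theorem \ref{4} as a dictionary: an invertible entry of $\mathbb{F}_{k-1}$ supported on elements of $A$ is matched with an \emph{equal}, hence invertible, entry of $\mathbb{G}_{k-1}$ supported on $B=f(A)$, which is exactly what is needed to perform the mirror cancellation and define $\mathbb{G}_k$. For the base case $u=0$, part (i) is vacuous since $\mathbb{G}_0=\mathbb{T}_M$ is given, and part (ii) at level $0$ is precisely Theorem \ref{4}. I would also record once and for all that, since $f\colon A\to B$ is a bijection and the basis elements deleted in passing from $\mathbb{G}_{k-1}$ to $\mathbb{G}_k$ are by construction the $f$-images of those deleted in passing from $\mathbb{F}_{k-1}$ to $\mathbb{F}_k$, the map $f$ restricts, at every stage $k$, to a bijection between the basis elements of $\mathbb{F}_k$ lying in $A$ and the basis elements of $\mathbb{G}_k$ lying in $B$; in particular $f$ preserves homological degree throughout.

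For the inductive step, assume $\mathbb{G}_0,\ldots,\mathbb{G}_{u-1}$ have been built and (ii) holds at level $u-1$. By the definition of the $\mathbb{F}_k$'s there is an invertible entry $a_{\pi_{u-1}\theta_{u-1}}^{(u-1)}$ of $\mathbb{F}_{u-1}$ with $[\theta_{u-1}],[\pi_{u-1}]\in A$. Applying (ii) at level $u-1$ to these two elements, $f[\theta_{u-1}]$ and $f[\pi_{u-1}]$ are basis elements of $\mathbb{G}_{u-1}$ in consecutive homological degrees, and they determine an entry $b_{\pi_{u-1}\theta_{u-1}}^{(u-1)}=a_{\pi_{u-1}\theta_{u-1}}^{(u-1)}$, which is therefore invertible. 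This licenses the cancellation $0\to Sf[\theta_{u-1}]\to Sf[\pi_{u-1}]\to 0$ that defines $\mathbb{G}_u$, proving (i).

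To prove (ii) at level $u$, let $a_{\tau\sigma}^{(u)}$ be an entry of $\mathbb{F}_u$ with $[\sigma],[\tau]\in A$. Since $[\sigma],[\tau]$ survive into $\mathbb{F}_u$, they differ from $[\theta_{u-1}],[\pi_{u-1}]$, so $f[\sigma],f[\tau]$ survive into $\mathbb{G}_u$ and determine an entry $b_{\tau\sigma}^{(u)}$. Now I would apply the consecutive-cancellation formulas [Al, Lemma 3.2(iii),(iv)] on both sides. If $\hdeg[\sigma]\neq\hdeg[\theta_{u-1}]$, then $b_{\tau\sigma}^{(u)}=b_{\tau\sigma}^{(u-1)}$ and $a_{\tau\sigma}^{(u)}=a_{\tau\sigma}^{(u-1)}$, and these agree by the induction hypothesis. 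If instead $\hdeg[\sigma]=\hdeg[\theta_{u-1}]$ — a condition unchanged by passing through $f$ — then
\[
b_{\tau\sigma}^{(u)}=b_{\tau\sigma}^{(u-1)}-\frac{b_{\tau\theta_{u-1}}^{(u-1)}\,b_{\pi_{u-1}\sigma}^{(u-1)}}{b_{\pi_{u-1}\theta_{u-1}}^{(u-1)}}
\quad\text{and}\quad
a_{\tau\sigma}^{(u)}=a_{\tau\sigma}^{(u-1)}-\frac{a_{\tau\theta_{u-1}}^{(u-1)}\,a_{\pi_{u-1}\sigma}^{(u-1)}}{a_{\pi_{u-1}\theta_{u-1}}^{(u-1)}}.
\]
Each of the four entries on the right of the first equation is indexed by a pair drawn from $\{[\sigma],[\tau],[\theta_{u-1}],[\pi_{u-1}]\}\subseteq A$, so the induction hypothesis identifies it with the corresponding entry on the right of the second equation; hence $b_{\tau\sigma}^{(u)}=a_{\tau\sigma}^{(u)}$, and the induction closes.

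The main obstacle — in fact the only delicate point — is keeping the argument strictly inside $A$ and $B$: one must verify that the update rule for an entry of $\mathbb{G}_{u-1}$ supported on $B$ refers only to other entries supported on $B$, which holds because [Al, Lemma 3.2] expresses the new $b_{\tau\sigma}$ purely in terms of $b_{\tau\theta_{u-1}}$, $b_{\pi_{u-1}\sigma}$, and $b_{\pi_{u-1}\theta_{u-1}}$, and that the case distinction in that lemma is governed solely by homological degree, which $f$ respects. Once these two points are pinned down, the remainder of the proof is bookkeeping.
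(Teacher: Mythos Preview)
Your proof is correct and follows essentially the same approach as the paper: induction on $u$, with the base case supplied by Theorem \ref{4}, and the inductive step using the invertibility of $a^{(u-1)}_{\pi_{u-1}\theta_{u-1}}$ together with (ii) at level $u-1$ to legitimize the mirror cancellation, followed by the case split from [Al, Lemma 3.2(iii),(iv)] to match the updated entries. The paper's write-up is slightly terser but the logical structure is identical.
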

 
 \begin{proof}
 The proof is by induction on $u$. If $u=0$, (i) and (ii) are the content of Theorem \ref{4}.\\
 Let us assume that parts (i) and (ii) hold for $u-1$.
 We will prove parts (i) and (ii) for $u$.\\
 (i) We need to show that $\mathbb{G}_u$ can be defined by the rule
 \[\mathbb{G}_{u-1}=\mathbb{G}_u\oplus\left(0\rightarrow Sf[\theta_{u-1}]\rightarrow Sf[\pi_{u-1}]\rightarrow 0\right).\]
 In other words, we must show that $f[\theta_{u-1}]$, $f[\pi_{u-1}]$ are in the basis of $\mathbb{G}_{u-1}$, and the entry $b^{(u-1)}_{\pi_{u-1}\theta_{u-1}}$ of
 $\mathbb{G}_{u-1}$, determined by them, is invertible. But this follows from induction hypothesis and the fact that $a^{(u-1)}_{\pi_{u-1}\theta_{u-1}}$ is
 invertible.\\
 (ii) Notice that the basis of $\mathbb{F}_u$ is obtained from the basis of $\mathbb{F}_{u-1}$, by eliminating $[\theta_{u-1}],[\pi_{u-1}]$. This means
 that $[\sigma],[\tau]$ are in the basis of $\mathbb{F}_{u-1}$, and the pairs $\left([\sigma],[\tau]\right)$, $\left([\theta_{u-1}],[\pi_{u-1}]\right)$ are
 disjoint. Then by induction hypothesis, $f[\sigma]$, $f[\tau]$ are in the basis of $\mathbb{G}_{u-1}$, and because $f$ is a bijection, 
 $\left(f[\sigma],f[\tau]\right)$, $\left(f[\theta_{u-1}],f[\pi_{u-1}]\right)$ are disjoint pairs. Since the basis of $\mathbb{G}_u$ is obtained from the
 basis of $\mathbb{G}_{u-1}$, by eliminating $f[\theta_{u-1}],f[\pi_{u-1}]$, we must have that $f[\sigma],f[\tau]$ are in the basis of $\mathbb{G}_u$.
 Finally, we need to prove that $b^{(u)}_{\tau\sigma}=a^{(u)}_{\tau\sigma}$. By [Al, Lemma 3.2(iv)], if 
 $\hdeg[\sigma]\neq\hdeg[\theta_{u-1}]$, then $a^{(u)}_{\tau\sigma}=a^{(u-1)}_{\tau\sigma}$. In this case, we must also have that 
 $\hdeg f[\sigma]\neq \hdeg f[\theta_{u-1}]$, which implies that $b^{(u)}_{\tau\sigma}=b^{(u-1)}_{\tau\sigma}$, by the same lemma. Then, by induction hypothesis, 
 $b^{(u)}_{\tau\sigma}=b^{(u-1)}_{\tau\sigma}=a^{(u-1)}_{\tau\sigma}=a^{(u)}_{\tau\sigma}$. On the other hand, if 
 $\hdeg[\sigma]=\hdeg[\theta_{u-1}]$, then $\hdeg f[\sigma]=\hdeg f[\theta_{u-1}]$. Combining the induction hypothesis with [Al, Lemma 3.2(iii)], we 
 obtain
 \[b^{(u)}_{\tau\sigma}=b^{(u-1)}_{\tau\sigma}-\dfrac{b^{(u-1)}_{\tau\theta_{u-1}}b^{(u-1)}_{\pi_{u-1}\sigma}}{b^{(u-1)}_{\pi_{u-1}\theta_{u-1}}}=
 a^{(u-1)}_{\tau\sigma}-\dfrac{a^{(u-1)}_{\tau\theta_{u-1}}a^{(u-1)}_{\pi_{u-1}\sigma}}{a^{(u-1)}_{\pi_{u-1}\theta_{u-1}}}=
 a^{(u)}_{\tau\sigma}\]
 \end{proof}
 
 \textit{Note}: Since the process of making consecutive cancellations between pairs of basis elements of $A$ must eventually terminate, there is an integer $u \geq 0$, such that $\mathbb{F}_0,\ldots,\mathbb{F}_u$ are resolutions of $S/M'$ defined as above, and each entry $a_{\pi\theta}^{(u)}$ of $\mathbb{F}_u$, determined by elements $[\theta],[\pi]$ of $A$, is noninvertible. For the rest of this section $u$ is such an integer and $\mathbb{F}_0,\ldots,\mathbb{F}_u$ are such resolutions. Moreover, the resolutions $\mathbb{G}_0,\ldots,\mathbb{G}_u$, constructed in Theorem \ref{theorem exist} are also fixed until the end of this section.
 
 \begin{theorem}\label{theorem 6}
 If $b_{\pi\theta}^{(u)}$ is an entry of $\mathbb{G}_u$, determined by basis elements $f[\theta],f[\pi]$ of $B$, then $b_{\pi\theta}^{(u)}$ is noninvertible.
 \end{theorem}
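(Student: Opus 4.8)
The plan is to deduce this statement almost immediately from Theorem~\ref{theorem exist}(ii) together with the choice of $u$ recorded in the Note preceding the statement. First I would observe that, since $f\colon A\to B$ is a bijection, every basis element of $B$ that occurs in $\mathbb{G}_u$ is of the form $f[\sigma]$ for a unique $[\sigma]\in A$; hence it suffices to consider an entry $b^{(u)}_{\pi\theta}$ of $\mathbb{G}_u$ determined by $f[\theta]$ and $f[\pi]$ with $[\theta],[\pi]\in A$, which is exactly the situation in the hypothesis. Because $f[\theta]$ and $f[\pi]$ determine an entry of $\mathbb{G}_u$, they sit in consecutive homological degrees; and since $f$ preserves homological degree, so do $[\theta]$ and $[\pi]$. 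Thus there is a well-defined entry $a^{(u)}_{\pi\theta}$ of $\mathbb{F}_u$ determined by $[\theta]$ and $[\pi]$, and the machinery of Theorem~\ref{theorem exist} applies.

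Next I would invoke Theorem~\ref{theorem exist}(ii), which yields precisely the equality $b^{(u)}_{\pi\theta}=a^{(u)}_{\pi\theta}$. Finally, by the defining property of $u$ (the Note just before this theorem), every entry of $\mathbb{F}_u$ determined by a pair of elements of $A$ is noninvertible; in particular $a^{(u)}_{\pi\theta}$ is noninvertible, and therefore so is $b^{(u)}_{\pi\theta}$. This completes the argument.

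I do not expect a genuine obstacle here: all the substance has been front-loaded into Theorem~\ref{theorem exist} (the transport of consecutive cancellations from $S/M'$ to $S/M$ along $f$, with matching matrix entries) and into the termination argument of the Note. The only points requiring care are the bookkeeping ones noted above — namely that $f$ is a homological-degree-preserving bijection between $A$ and $B$, so that passing from an entry of $\mathbb{G}_u$ between elements of $B$ to the corresponding entry of $\mathbb{F}_u$ between elements of $A$ is both legitimate and exhaustive — and these have already been set up in the preceding notation and theorems.
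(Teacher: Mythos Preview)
Your proposal is correct and follows the same approach as the paper: invoke Theorem~\ref{theorem exist}(ii) to obtain $b^{(u)}_{\pi\theta}=a^{(u)}_{\pi\theta}$, and then use the defining property of $u$ (from the Note) to conclude that $a^{(u)}_{\pi\theta}$, and hence $b^{(u)}_{\pi\theta}$, is noninvertible. The paper's proof is a single sentence to this effect; your additional bookkeeping (that $f$ restricts to a bijection between the surviving basis elements of $\mathbb{F}_u$ and $\mathbb{G}_u$, so the corresponding entry $a^{(u)}_{\pi\theta}$ is indeed defined) is sound and merely makes explicit what the paper leaves implicit.
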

 
 \begin{proof}
 This is an immediate consequence of Theorem \ref{theorem exist}(ii), and the fact that $a_{\pi\theta}^{(u)}$ is noninvertible.
 \end{proof}
 
 \begin{theorem}\label{theorem 7}
 For every $i \geq 0$, there is a bijective correspondence between the basis elements with multidegree $m$ and homological degree $i$ of $\mathbb{F}_u$ and $\mathbb{G}_u$.
 \end{theorem}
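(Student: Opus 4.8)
The plan is to describe explicitly which basis elements survive the two parallel sequences of cancellations and then simply restrict the bijection $f$. Write $C=\{[\theta_0],[\pi_0],\ldots,[\theta_{u-1}],[\pi_{u-1}]\}$ for the set of basis elements of $\mathbb{F}_0=\mathbb{T}_{m'_1,\ldots,m'_q}$ that are cancelled in passing to $\mathbb{F}_u$. By the way these cancellations were chosen, each $[\theta_k]$ and each $[\pi_k]$ belongs to $A$, so every element of $C$ has multidegree $m$; and because the cancellation at step $k$ is carried out among the basis elements that still remain, the $2u$ elements of $C$ are pairwise distinct.

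First I would show that the basis elements of $\mathbb{F}_u$ of multidegree $m$ are exactly $A\setminus C$. A consecutive cancellation only deletes basis elements and never creates new ones, so the basis of $\mathbb{F}_u$ is the basis of $\mathbb{F}_0$ with the elements of $C$ removed; since $A$ is by definition the full set of multidegree-$m$ basis elements of $\mathbb{F}_0$ and $C\subseteq A$, the multidegree-$m$ basis elements of $\mathbb{F}_u$ are precisely $A\setminus C$. Next, by Theorem \ref{theorem exist}(i) the resolutions $\mathbb{G}_0=\mathbb{T}_M,\ldots,\mathbb{G}_u$ are produced by the matching cancellations $\mathbb{G}_{k-1}=\mathbb{G}_k\oplus(0\rightarrow Sf[\theta_{k-1}]\rightarrow Sf[\pi_{k-1}]\rightarrow 0)$, so the basis of $\mathbb{G}_u$ is the basis of $\mathbb{T}_M$ with $f(C)=\{f[\theta_0],f[\pi_0],\ldots,f[\theta_{u-1}],f[\pi_{u-1}]\}$ removed. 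Since $f$ preserves multidegree, $f(C)\subseteq B$, and the identical argument shows that the multidegree-$m$ basis elements of $\mathbb{G}_u$ are precisely $B\setminus f(C)$.

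To finish, recall that $f\colon A\to B$ is a bijection that carries homological degree $i$ to homological degree $i$ (Theorem \ref{theorem AiBi} and the Notation following it). It sends $C$ bijectively onto $f(C)$, hence its restriction is a bijection $A\setminus C\to B\setminus f(C)$; restricting once more to a fixed homological degree $i$ produces the required bijection between the basis elements of $\mathbb{F}_u$ with multidegree $m$ and homological degree $i$ and those of $\mathbb{G}_u$. I expect the only point requiring real care — as opposed to routine bookkeeping — to be the observation that a cancellation never introduces a basis element of a multidegree not already present, which is exactly what guarantees that $A$ (respectively $B$) accounts for all multidegree-$m$ basis elements at every stage of the two processes; the rest is the combinatorics of deleting matched pairs from the two sides of $f$.
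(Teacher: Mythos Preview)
Your proof is correct and follows essentially the same approach as the paper's: both identify the surviving multidegree-$m$ basis elements of $\mathbb{F}_u$ and $\mathbb{G}_u$ as $A\setminus C$ and $B\setminus f(C)$ respectively (the paper does this degree-by-degree, writing $A'_i=A_i\setminus\{[\theta_0],\ldots,[\pi_{u-1}]\}$ and $B'_i=B_i\setminus\{f[\theta_0],\ldots,f[\pi_{u-1}]\}$), and then restrict the bijection $f$. Your closing caveat about cancellations not introducing new basis elements is accurate but not really a subtlety here, since consecutive cancellations only delete basis elements.
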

 
 \begin{proof}
 The set of basis elements of $\mathbb{F}_u$ with multidegree $m$ and homological degree $i$ is given by
 \[A'_i=A_i \setminus \{[\theta_0],[\pi_0],\ldots,[\theta_{u-1}],[\pi_{u-1}]\}.\]
 Likewise, the set of basis elements of $\mathbb{G}_u$ with multidegree $m$ and homological degree $i$ is given by
 \[B'_i=B_i \setminus \{f[\theta_0],f[\pi_0],\ldots,f[\theta_{u-1}],f[\pi_{u-1}]\}.\]
 Since $A_i \xrightarrow{f_i} B_i$ is a bijection, and given that $[\theta_k]$ (respectively, $[\pi_k]$) is in $A_i$ if and only if $f[\theta_k]$ (respectively, $f[\pi_k]$) is in $B_i$, we must have that $A'_i\xrightarrow{f_i} B'_i$ is also a bijection.
 \end{proof}
 
 \begin{theorem}\label{theorem 8}
 For all $i \geq 0$, $\betti_{i,m}(S/M)=\betti_{i,m}(S/M')$.
 \end{theorem}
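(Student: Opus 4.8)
The plan is to express each of $\betti_{i,m}(S/M)$ and $\betti_{i,m}(S/M')$ as the number of basis elements of multidegree $m$ and homological degree $i$ in a suitable (nonminimal) free resolution, and then to identify these two counts via the bijection of Theorem~\ref{theorem 7}. All the ingredients have been assembled in this section; the theorem is essentially the act of combining them.

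First I would treat $S/M$. By Theorem~\ref{theorem exist}(i), $\mathbb{G}_u$ is a free resolution of $S/M$ obtained from $\mathbb{T}_M$ by consecutive cancellations, and by Theorem~\ref{theorem 6} every entry of $\mathbb{G}_u$ determined by two basis elements of multidegree $m$ is noninvertible (note that every basis element of $\mathbb{G}_u$ of multidegree $m$ belongs to $B$, since the cancellations producing $\mathbb{G}_u$ only remove the pairs $f[\theta_k],f[\pi_k]\in B$). Corollary~\ref{corollary 1.5}, applied with the multidegree $l=m$, then gives $\betti_{i,m}(S/M)=\#\{[\sigma]\in\mathbb{G}_u:\mdeg[\sigma]=m,\ \hdeg[\sigma]=i\}$, which in the notation of Theorem~\ref{theorem 7} is precisely $\#B'_i$. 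The identical argument on the $M'$ side --- now with $\mathbb{F}_u$ in place of $\mathbb{G}_u$, using that $\mathbb{F}_u$ descends from the Taylor resolution $\mathbb{T}_{m'_1,\ldots,m'_q}$ by consecutive cancellations and that, by the Note preceding Theorem~\ref{theorem 6}, every entry of $\mathbb{F}_u$ between elements of $A$ is noninvertible --- yields $\betti_{i,m}(S/M')=\#A'_i$. Since Theorem~\ref{theorem 7} provides a bijection $A'_i\to B'_i$, we conclude $\#A'_i=\#B'_i$, and hence $\betti_{i,m}(S/M)=\betti_{i,m}(S/M')$.

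The step I would be most careful about is the second application of Corollary~\ref{corollary 1.5}: as stated, that corollary refers to resolutions obtained from the \emph{minimal} Taylor resolution $\mathbb{T}_M$, whereas $\mathbb{F}_u$ descends from the nonminimal Taylor resolution $\mathbb{T}_{m'_1,\ldots,m'_q}$ of $S/M'$. I would resolve this by observing that the proofs of Theorem~\ref{1} and Corollary~\ref{corollary 1.5} use only that the ambient resolution is reached from \emph{a} Taylor resolution by consecutive cancellations and can itself be further reduced to a minimal resolution --- both of which hold for $\mathbb{F}_u$ --- so that argument applies verbatim; alternatively, one can first reduce $\mathbb{T}_{m'_1,\ldots,m'_q}$ to $\mathbb{T}_{M'}$ by cancellations and check that this does not disturb the multidegree-$m$ strand. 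Beyond this point the proof is pure bookkeeping with the sets $A'_i$ and $B'_i$ from Theorem~\ref{theorem 7}.
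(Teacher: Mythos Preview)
Your proof is correct and follows exactly the paper's approach: apply Corollary~\ref{corollary 1.5} to $\mathbb{G}_u$ (via Theorem~\ref{theorem 6}) and to $\mathbb{F}_u$ (via the Note after Theorem~\ref{theorem exist}), then invoke the bijection of Theorem~\ref{theorem 7}. Your extra care about whether Corollary~\ref{corollary 1.5} applies to $\mathbb{F}_u$, which descends from the nonminimal Taylor resolution $\mathbb{T}_{m'_1,\ldots,m'_q}$ rather than $\mathbb{T}_{M'}$, is a valid scruple that the paper passes over in silence; your resolution of it is correct.
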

 
 \begin{proof}
 By Theorem \ref{theorem 6} and Corollary \ref{corollary 1.5}, we have that $\betti_{i,m}(S/M)=\#\{[\sigma]\in \mathbb{G}_u: \mdeg[\sigma]=m,\hdeg[\sigma]=i\}$. By the note following Theorem \ref{theorem exist} and Corollary \ref{corollary 1.5},
 \[\betti_{i,m}(S/M')=\#\{[\sigma]\in \mathbb{F}_u: \mdeg[\sigma]=m,\hdeg[\sigma]=i\}.\]
 Finally, by Theorem \ref{theorem 7}, we have that $\betti_{i,m}(S/M)=\betti_{i,m}(S/M')$.
 \end{proof}
 
 \section{Characterization Theorems}%5
 
 In this section, $S=k[x_1,\ldots,x_n]$ for an arbitrary, but fixed $n\geq 1$. Let $l=x_1^{\alpha_1}\ldots x_n^{\alpha_n}$, and $l'=x_1^{\beta_1}\ldots x_n^{\beta_n}$ be two monomials. We say that  $l$ \textbf{strongly divides} $l'$, 
 if $\alpha_i<\beta_i$, whenever $\alpha_i\neq 0$. 
 
 \begin{theorem}\label{char theorems 1}
 Let $M$ be minimally generated by $G$. Let $\mathbb{F}$ be a minimal resolution of $S/M$. Suppose that the basis of $\mathbb{F}$ contains an element $[\sigma]$, with $\hdeg[\sigma]=n$. Then there exists a subset $L$ of $G$, such that:
 
 \begin{enumerate}[(i)]
 \item $L$ is dominant.
 \item $\# L=n$.
 \item $\lcm(L)=\mdeg[\sigma]$.
 \item No element of $G$ strongly divides $\lcm(L)$.
 \end{enumerate} 
 \end{theorem}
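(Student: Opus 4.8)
The plan is to peel the problem down to the squarefree case and invoke Theorem~\ref{theorem 1.2}, using the twin-ideal machinery of the previous section along the way. Put $l=\mdeg[\sigma]$; then $\betti_{n,l}(S/M)\ne 0$, and since every basis element of a resolution obtained from $\mathbb T_M$ by consecutive cancellations is a Taylor symbol, $l$ must be the lcm of $n$ distinct minimal generators of $M$, each of which divides $l$. Write $G_l=\{g\in G: g\mid l\}$ and $N=(G_l)$, so that $\lcm(G_l)=l$. The first step is to show $\betti_{n,l}(S/M)=\betti_{n,l}(S/N)$. Indeed, the basis elements of $\mathbb T_M$ of multidegree $l$ are exactly the Taylor symbols on subsets of $G_l$ with lcm $l$, and when one computes $\betti_{\bullet,l}$ via Corollary~\ref{corollary 1.5} the only differential entries that ever intervene join two such symbols and are updated only through other such entries; this multidegree-$l$ strand is identical in $\mathbb T_M$ and $\mathbb T_N$, so the two Betti numbers coincide, and in particular $\betti_{n,l}(S/N)\ne 0$.

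Next I pass to the twin ideal $N'$ of $N$ (Construction~\ref{const 1}). Since $l=\lcm(G_l)$ is the lcm of the generators of $N$, Theorem~\ref{theorem 8} gives $\betti_{n,l}(S/N')=\betti_{n,l}(S/N)\ne 0$. Write $l=x_1^{\gamma_1}\cdots x_n^{\gamma_n}$. By Construction~\ref{const 1}, the generator of $N'$ attached to $g\in G_l$ is $g'=\prod_{j\in T_g}x_j^{\gamma_j}$, where $T_g$ is the set of variables in which $g$ attains the exponent occurring in $l$; hence every minimal generator of $N'$ is a product of some of the blocks $x_j^{\gamma_j}$ with $\gamma_j\ge 1$. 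Replacing each such block $x_j^{\gamma_j}$ by $x_j$ converts $N'$ into a squarefree monomial ideal $J$ inside the polynomial subring $S_l$ generated by the variables dividing $l$. The Taylor resolution of $S_l/J$ on its minimal generators is carried to that of $S/N'$ by the multidegree relabeling $\prod_{j\in U}x_j^{\gamma_j}\mapsto\prod_{j\in U}x_j$, which preserves and reflects noninvertibility of differential entries; so the two consecutive-cancellation processes run in lockstep (compare Theorems~\ref{theorem AiBi}--\ref{theorem 8}), and Corollary~\ref{corollary 1.5} yields $\betti_{n,\operatorname{rad}(l)}(S_l/J)=\betti_{n,l}(S/N')\ne 0$, where $\operatorname{rad}(l)=\prod_{x_j\mid l}x_j$.

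Consequently $\pd(S_l/J)\ge n$, while by the Hilbert syzygy theorem $\pd(S_l/J)$ is at most the number of variables of $S_l$, which is at most $n$; so equality holds throughout, every variable divides $l$, $S_l=S$, and $\pd(S/J)=n$. Theorem~\ref{theorem 1.2} then forces $J=(x_1,\dots,x_n)$, that is, the minimal generators of $N'$ are precisely $x_1^{\gamma_1},\dots,x_n^{\gamma_n}$. Unwinding Construction~\ref{const 1} once more, for each $i$ there is $g_i\in G_l$ whose $x_i$-exponent equals $\gamma_i$ and whose $x_j$-exponent is strictly less than $\gamma_j$ for every $j\ne i$. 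Set $L=\{g_1,\dots,g_n\}$. Then each $g_i$ is dominant in $x_i$ relative to $L$ (every other $g_j$ has $x_i$-exponent $<\gamma_i$), which is (i); the $g_i$ are pairwise distinct, which is (ii); an exponentwise comparison gives $\lcm(L)=x_1^{\gamma_1}\cdots x_n^{\gamma_n}=l=\mdeg[\sigma]$, which is (iii); and if some $g\in G$ strongly divided $l=\lcm(L)$, then $g\mid l$ and the $x_j$-exponent of $g$ would be $<\gamma_j$ for every $j$ (using $\gamma_j\ge 1$), so $T_g=\varnothing$, hence $g'=1$ and $N'=S$, contradicting $\betti_n(S/N')\ne 0$; this is (iv).

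The crux is the two reduction steps, from $M$ to $N$ and from $N'$ to $J$: both rest on the principle that a single multidegree strand of a resolution, together with the differential entries internal to it, can be cancelled down in isolation, with Corollary~\ref{corollary 1.5} then reading off the relevant Betti number. The delicate point will be to verify that the consecutive-cancellation bookkeeping genuinely runs in lockstep in each case, so that invertibility of entries is matched exactly; granting that, Theorem~\ref{theorem 1.2} finishes the argument and extracting $L$ is routine exponent arithmetic.
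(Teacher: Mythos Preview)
Your proposal is correct and follows essentially the same route as the paper: restrict to the generators dividing $l$, pass to the twin ideal, reduce to the squarefree case by the substitution $x_j^{\gamma_j}\mapsto x_j$, apply Theorem~\ref{theorem 1.2}, and read off $L$. The only cosmetic differences are that the paper cites [GHP, Theorem~2.1] for the reduction $\betti_{n,l}(S/M)=\betti_{n,l}(S/M_l)$ where you argue it via the multidegree-$l$ strand, and the paper verifies that every variable divides $l$ before introducing the twin ideal rather than after.
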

 
 \begin{proof}
 Let $l=\mdeg[\sigma]$, and let $\{l_1,\ldots,l_q\}$ be the set of all monomials in $G$ dividing $l$. Let $M_l=(l_1,\ldots,l_q)$. By [GHP, Theorem 2.1], the minimal resolution $\mathbb{F}_l$ of $S/M_l$ is the subresolution of $\mathbb{F}$, defined by the basis elements of $\mathbb{F}$ whose multidegrees are divisors of  $l$. Thus, $[\sigma]$ is in the basis of $\mathbb{F}_l$ and, hence, $\pd(S/M_l)=n$.\\
 Suppose, by means of contradiction, that for some $1\leq i\leq n$, $x_i\nmid l$. Then none of the generators of $M_l$ is divisible by $x_i$, and $M_l$ is a monomial ideal in $k[x_1,\ldots,\widehat{x_i},\ldots,x_n]$. It follows from the Hilbert Syzygy theorem that $\pd(S/M_l) \leq n-1$, an absurd. Thus, $l$ must be of the form $l=x_1^{\alpha_1}\ldots x_n^{\alpha_n}$, where $\alpha_i \geq 1$, for all $i$. \\
 Let $M'_l=(l'_1,\ldots,l'_q)$ be the twin ideal of $M_l$. If we express $l_1,\ldots,l_q$ in the form: 
 
 \[\begin{array}{ccc}
 l_1 &= &x_1^{\alpha_{11}}\ldots x_n^{\alpha_{1n}},\\
 & \vdots& \\
 l_q &=&x_1^{\alpha_{q1}}\ldots x_n^{\alpha_{qn}},
 \end{array} \]
 then $l'_1,\ldots,l'_q$ must be of the form:
 
\[\begin{array}{ccc}
 l'_1&=&x_1^{\beta_{11}}\ldots x_n^{\beta_{1n}},\\
 & \vdots &  \\
 l'_q&=&x_1^{\beta_{q1}}\ldots x_n^{\beta_{qn}},
 \end{array}\]
 where, for all $1\leq i\leq q$, and all $1 \leq j\leq n$,
  $\beta_{ij}=
 \begin{cases} 
 \alpha_j \text{, if } \alpha_{ij}=\alpha_j\\
 0 \text{, if } \alpha_{ij} \neq \alpha_j.
 \end{cases}$
 
 In particular, $x_j$ appears with exponent either $\alpha_j$ or $0$ in the factorization of each generator $l'_1,\ldots,l'_q$. Let us make the change of variables $y_1=x_1^{\alpha_1},\ldots,y_n=x_n^{\alpha_n}$. Then 
 $\l'_1,\ldots,l'_q$ can be represented in the form:
 \[\begin{array}{ccc}
 l'_1&=&y_1^{\delta_{11}}\ldots y_n^{\delta_{1n}},\\
 & \vdots &  \\
 l'_q&=&y_1^{\delta_{q1}}\ldots y_n^{\delta_{qn}},
 \end{array}\]
 where, for all $1\leq i\leq q$, and all $1\leq j\leq n$,
  $\delta_{ij}=
 \begin{cases} 
 1 \text{, if } \alpha_{ij}=\alpha_j\\
 0 \text{, if } \alpha_{ij} \neq \alpha_j.
 \end{cases}$
 Hence, we can interpret $M'_l$ as a squarefree monomial ideal in $k[y_1,\ldots,y_n]$, such that $\pd\left(\dfrac{k[y_1,\ldots,y_n]}{M'_l}\right)=n$. By Theorem \ref{theorem 1.2}, $M'_l=(y_1,\ldots,y_n)$. Therefore, 
 $y_1,\ldots,y_n \in \{l'_1,\ldots,l'_q\}$. After reordering the subindices, we may assume that $l'_1=y_1=x_1^{\alpha_1},\ldots,l'_n=y_n=x_n^{\alpha_n}$. This means that:
 \[
 \begin{array}{cclll}
 l_1 &= &x_1^{\alpha_1}x_2^{\alpha_{12}} & \ldots  & x_n^{\alpha_{1n}} \text{, with } \alpha_{1i}<\alpha_i \text{, for all } i\neq 1.\\
 l_2 &= &x_1^{\alpha_{21}}x_2^{\alpha_2} &\ldots  & x_n^{\alpha_{2n}} \text{, with } \alpha_{2i}<\alpha_i \text{, for all } i\neq 2.\\
 
     &\vdots\\
 l_n &= &x_1^{\alpha_{n1}} \ldots                & x_{n-1}^{\alpha_{n n-1}} & x_n^{\alpha_{n}} \text{, with } \alpha_{ni}<\alpha_i \text{, for all } i\neq n.\\
    
 \end{array}\]
 This implies that each $x_i$ appears with exponent $\alpha_i$ in the factorization of $l_i$, and with exponent $\alpha_{ki}<\alpha_i$ in the factorization of $l_k$, if $k\neq i$. It follows that the set $L=\{l_1,\ldots,l_n\}$ is dominant (where $l_i$ is dominant in $x_i$), of cardinality $n$, which proves (i) and (ii). Moreover, $\lcm(L)=\lcm(l_1,\ldots,l_n)=x_1^{\alpha_1}\ldots x_n^{\alpha_n}=l$, which proves (iii). \\
 Finally, suppose that there is an element $m\in G$ that strongly divides $\lcm(L)=l$. Then, $M'_l$ contains $m'=1$ among its generators. Thus, $M'_l=(1)=S$, and $\pd(S/M'_l)\neq n$, a contradiction. Therefore, no element of $G$ strongly divides $\lcm(L)$, which proves (iv). 
 \end{proof}
 
 The next theorem is essentially a converse to Theorem \ref{char theorems 1}.
 
 \begin{theorem}\label{char theorems 2}
 Let $M$ be minimally generated by $G$. Suppose that there is a subset $L$ of $G$ with the following properties:
 
 \begin{enumerate}[(i)]
 \item $L$ is dominant.
 \item $\# L=n$.
 \item No element of $G$ strongly divides $\lcm(L)$.
 \end {enumerate}
 Then, there is a basis element $[\sigma]$ of the minimal resolution $\mathbb{F}$ of $S/M$, such that $\hdeg[\sigma]=n$, and $\mdeg[\sigma]=\lcm(L)$. Moreover, if $[\tau]$ is in the basis of $\mathbb{F}$, and $[\tau]\neq [\sigma]$,
 then $\mdeg[\tau]\neq \lcm(L)$.
 \end{theorem}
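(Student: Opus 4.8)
The plan is to reduce everything to a Koszul complex computation, using the two main tools already developed: the restriction of a minimal resolution to a divisor‑closed set of multidegrees ([GHP, Theorem 2.1], exactly as in the proof of Theorem \ref{char theorems 1}) and the invariance of top‑multidegree Betti numbers under passage to the twin ideal (Theorem \ref{theorem 8}). Concretely, I will show
\[
\betti_{n,\lcm(L)}(S/M)=1 \quad\text{and}\quad \betti_{i,\lcm(L)}(S/M)=0 \text{ for every } i\neq n,
\]
which is precisely the assertion: it says there is a unique basis element of $\mathbb{F}$ of multidegree $\lcm(L)$ and that it sits in homological degree $n$.

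First I would fix notation. After reindexing, write $L=\{l_1,\dots,l_n\}$ with $l_i$ dominant in $x_i$, and set $l=\lcm(L)=x_1^{\alpha_1}\cdots x_n^{\alpha_n}$. Dominance forces the exponent of $x_i$ in $l_i$ to equal $\alpha_i$ and to be strictly larger than the exponent of $x_i$ in $l_j$ for every $j\neq i$; in particular each $\alpha_i\ge 1$. Let $\{l_1,\dots,l_q\}$ (with $q\ge n$) be the set of all elements of $G$ dividing $l$; since $L$ sits among them, $\lcm(l_1,\dots,l_q)=l$. Put $M_l=(l_1,\dots,l_q)$. By [GHP, Theorem 2.1], the minimal resolution $\mathbb{F}_l$ of $S/M_l$ is the subresolution of $\mathbb{F}$ spanned by the basis elements of $\mathbb{F}$ whose multidegrees divide $l$; restricting to multidegree exactly $l$ shows $\betti_{i,l}(S/M)=\betti_{i,l}(S/M_l)$ for all $i$, so it suffices to analyze $M_l$.

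Next, let $M'_l=(l'_1,\dots,l'_q)$ be the twin ideal of $M_l$. By Theorem \ref{theorem 8}, $\betti_{i,l}(S/M_l)=\betti_{i,l}(S/M'_l)$ for all $i$. The crucial step — and the only place where hypotheses (i)–(ii) and (iii) are used — is to identify $M'_l$ exactly: the strict inequalities coming from dominance force $l'_i=x_i^{\alpha_i}$ for $i=1,\dots,n$ (not merely $x_i^{\alpha_i}\mid l'_i$), while hypothesis (iii) prevents any twin $l'_k$ from being $1$, so each $l'_k$ is divisible by some $x_j^{\alpha_j}$ and is therefore redundant. Hence $M'_l=(x_1^{\alpha_1},\dots,x_n^{\alpha_n})$. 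Since $x_1^{\alpha_1},\dots,x_n^{\alpha_n}$ is a regular sequence in $S$, the minimal free resolution of $S/M'_l$ is its Koszul complex, whose only basis element of multidegree $l=x_1^{\alpha_1}\cdots x_n^{\alpha_n}$ is the top one, in homological degree $n$. Thus $\betti_{n,l}(S/M'_l)=1$ and $\betti_{i,l}(S/M'_l)=0$ for $i\neq n$. Chaining the three equalities yields $\betti_{n,l}(S/M)=1$ and $\betti_{i,l}(S/M)=0$ for $i\neq n$, which is the claim.

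I expect the main obstacle to be the bookkeeping in the identification $M'_l=(x_1^{\alpha_1},\dots,x_n^{\alpha_n})$: one must check that the $\lcm$ of all generators of $G$ dividing $l$ is still $l$ (so Theorem \ref{theorem 8} applies with top multidegree $m=l$), that dominance yields the exact equality $l'_i=x_i^{\alpha_i}$, and that condition (iii) is exactly what rules out a twin equal to $1$. Everything else is a direct appeal to [GHP, Theorem 2.1], Theorem \ref{theorem 8}, and the structure of the Koszul complex. An alternative to the [GHP] reduction would be to stay with $M$ and invoke Corollary \ref{corollary 1.5}, after checking that in a resolution obtained from $\mathbb{T}_M$ by consecutive cancellations every entry determined by basis elements of multidegree $l$ is noninvertible; but restricting to $M_l$ and comparing with the Koszul complex is cleaner.
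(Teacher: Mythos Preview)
Your proposal is correct and follows essentially the same route as the paper: restrict to the generators dividing $l=\lcm(L)$ via [GHP, Theorem~2.1], pass to the twin ideal $M'_l$ via Theorem~\ref{theorem 8}, use dominance plus hypothesis~(iii) to identify $M'_l=(x_1^{\alpha_1},\dots,x_n^{\alpha_n})$, and read off the Betti numbers from the Koszul complex. The only cosmetic difference is that the paper states the Betti numbers of $S/M'_l$ directly rather than naming the Koszul complex, and restricts attention to $k<n$ (the case $k>n$ being automatic by Hilbert's Syzygy Theorem).
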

 
 \begin{proof}
 Let $L=\{l_1,\ldots,l_n\}$, where each $l_i$ is dominant in $x_i$ and let $\lcm(L)=l$. Let $G_l=\{m\in G:m\mid l\}$, and let $M_l$ be the ideal generated by $G_l$. If we express $l_1,\ldots,l_n$ in the form:
 
 \[\begin{array}{ccc}
 l_1 &= &x_1^{\alpha_{11}}\ldots x_n^{\alpha_{1n}},\\
 & \vdots& \\
 l_n &=&x_1^{\alpha_{n1}}\ldots x_n^{\alpha_{nn}},
 \end{array} \]
 then $l=\prod \limits_{i=1}^n x_i^{\alpha_{ii}}$.\\
 Let $M'_l$ be the twin ideal of $M_l$. Then $M'_l$ contains $l'_1=x_1^{\alpha_{11}},\ldots,l'_n=x_n^{\alpha_{nn}}$ among its generators. Moreover, it follows from (iii) that if $m \in G_l$, there must be an index $i$, such that $x_i$ appears with exponent $\alpha_{ii}$ in the factorization of $m$. Therefore, $m$ is divisible by $l'_i=x_i^{\alpha_{ii}}$. Thus $M'_l=(x_1^{\alpha_{11}},\ldots, x_n^{\alpha_{nn}})$. Hence, $\betti_{n,l}(S/M'_l)=1$, and 
 $\betti_{k,l}(S/M'_l)=0$, for $k<n$. By Theorem \ref{theorem 8}, $\betti_{n,l}(S/M_l)=1$, and $\betti_{k,l}(S/M_l)=0$, for $k<n$. By [GHP, Theorem 2.1], $\betti_{n,l}(S/M)=1$, and $\betti_{k,l}(S/M)=0$, for $k<n$. This implies that there is an element $[\sigma]$ in the basis of $\mathbb{F}$, with $\hdeg[\sigma]=n$, and $\mdeg[\sigma]= l=\lcm(L)$. Moreover, since $\sum\limits_{k=1}^n \betti_{k,l}(S/M_l)=1$, it follows that every basis element $[\tau]\neq [\sigma]$ must have multidegree $\mdeg[\tau]\neq l=\lcm(L)$.
 \end{proof}
 
 \begin{corollary}
 Let $M$ be minimally generated by $G$. The following statements are equivalent:
 
 \begin{enumerate}[(i)]
 \item $\pd(S/M)=n$.
 \item $G$ contains a dominant set $L$ of cardinality $n$, such that no element in $G$ strongly divides $\lcm(L)$.
 \end{enumerate}
 \end{corollary}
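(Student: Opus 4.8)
The plan is to read off the corollary directly from Theorems \ref{char theorems 1} and \ref{char theorems 2}, which together already constitute the substance of the equivalence: the minimal resolution $\mathbb{F}$ of $S/M$ has a basis element in homological degree $n$ if and only if $G$ contains a dominant subset $L$ of cardinality $n$ no element of $G$ strongly divides $\lcm(L)$. The only bookkeeping left is to translate the statement ``$\mathbb{F}$ carries a basis element in homological degree $n$'' into ``$\pd(S/M)=n$'', for which one also invokes the Hilbert Syzygy theorem.

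For the direction $(i)\Rightarrow(ii)$, I would begin by noting that $\pd(S/M)=n$ means $\betti_n(S/M)\neq 0$, so by the definition of Betti numbers the minimal resolution $\mathbb{F}$ of $S/M$ has a basis element $[\sigma]$ with $\hdeg[\sigma]=n$. Theorem \ref{char theorems 1} then supplies a subset $L\subseteq G$ satisfying its four conclusions; dropping conclusion (iii) (the identity $\lcm(L)=\mdeg[\sigma]$, which is not part of the claim) leaves precisely that $L$ is dominant, has $\#L=n$, and is such that no element of $G$ strongly divides $\lcm(L)$. That is statement (ii).

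For $(ii)\Rightarrow(i)$, suppose $G$ contains such an $L$. Then $L$ satisfies hypotheses (i)--(iii) of Theorem \ref{char theorems 2}, which produces a basis element $[\sigma]$ of $\mathbb{F}$ with $\hdeg[\sigma]=n$. Hence $\betti_n(S/M)\neq 0$ and $\pd(S/M)\geq n$. Since $S=k[x_1,\ldots,x_n]$ has $n$ variables, the Hilbert Syzygy theorem gives $\pd(S/M)\leq n$, so $\pd(S/M)=n$.

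There is essentially no obstacle here, since both implications are immediate from theorems already proved. The one point that merits a moment's attention is that the hypothesis list of Theorem \ref{char theorems 2} is shorter than the conclusion list of Theorem \ref{char theorems 1}—it omits the condition $\lcm(L)=\mdeg[\sigma]$—so one must verify that the three remaining conditions really do match the data produced and consumed in each direction; they do, which is exactly why (ii) can be phrased purely in terms of $G$ and $L$ without reference to a basis element $[\sigma]$.
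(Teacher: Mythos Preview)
Your proposal is correct and follows essentially the same approach as the paper's own proof: both directions are read off directly from Theorems \ref{char theorems 1} and \ref{char theorems 2}, with the Hilbert Syzygy theorem invoked in the $(ii)\Rightarrow(i)$ direction to bound $\pd(S/M)$ from above.
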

 
 \begin{proof}
 (i) $\Rightarrow$ (ii) Since $\pd(S/M)=n$, the minimal resolution of $S/M$ must contain a basis element in homological degree $n$. By Theorem \ref{char theorems 1}, there exists $L\subseteq G$, such that $L$ is dominant, $\# L=n$, and no monomial in $G$ strongly divides $\lcm(L)$.\\
 (ii)$\Rightarrow$(i) By Theorem \ref{char theorems 2}, the minimal resolution of $S/M$ must contain a basis element in homological degree $n$. By the Hilbert Syzygy theorem, $\pd(S/M)=n$.
 \end{proof}

 \section{Main Results}%6
 
 The following notation is fixed for the rest of this section. Let $n$ be the number of variables of $S$. Let $G$ be the minimal generating set of $M$, and let $\mathscr{D}_M$ denote the class
 $\mathscr{D}_M=
 \{D \subseteq G: D$ is a dominant set of cardinality $n$, such that no generator of $G$ strongly divides $\lcm(D)$$\}$.
 
 \begin{theorem}\label{main results 1}
 Suppose that $[\sigma]$ is a basis element of a minimal resolution of $S/M$, such that $\mdeg[\sigma]=l$, and $\hdeg[\sigma]=n$. Then 
 \[\betti_{i,l}(S/M)=\begin{cases}
 				1, \text{ if } i = n\\
				0, \text{ otherwise.}
				\end{cases}\]
 \end{theorem}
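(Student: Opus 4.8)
The plan is to reduce the statement to the already-proven characterization Theorem~\ref{char theorems 1} and its partial converse Theorem~\ref{char theorems 2}. Given the hypothesis that $[\sigma]$ is a basis element of a minimal resolution $\mathbb{F}$ of $S/M$ with $\hdeg[\sigma]=n$ and $\mdeg[\sigma]=l$, Theorem~\ref{char theorems 1} immediately produces a subset $L\subseteq G$ that is dominant, has cardinality $n$, satisfies $\lcm(L)=l$, and has the property that no element of $G$ strongly divides $\lcm(L)$. So $L$ is a set of exactly the type required to invoke Theorem~\ref{char theorems 2}.

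Next I would apply Theorem~\ref{char theorems 2} to this $L$. That theorem guarantees the existence of a basis element $[\sigma']$ of $\mathbb{F}$ with $\hdeg[\sigma']=n$ and $\mdeg[\sigma']=\lcm(L)=l$, and moreover it asserts uniqueness: any basis element $[\tau]$ of $\mathbb{F}$ with $[\tau]\neq[\sigma']$ has $\mdeg[\tau]\neq l$. Since $[\sigma]$ itself is a basis element of $\mathbb{F}$ with multidegree $l$, the uniqueness clause forces $[\sigma]=[\sigma']$, and more importantly it tells us that $[\sigma]$ is the \emph{only} basis element of $\mathbb{F}$ whose multidegree equals $l$. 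Counting basis elements of $\mathbb{F}$ in each homological degree with multidegree $l$ then gives the multigraded Betti numbers directly: $\betti_{i,l}(S/M)=\#\{[\tau]\in\mathbb{F}:\mdeg[\tau]=l,\ \hdeg[\tau]=i\}$, which is $1$ when $i=n$ (namely $[\sigma]$) and $0$ otherwise.

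The only point requiring a little care is the bookkeeping on which ``$L$'' is being fed into Theorem~\ref{char theorems 2}: Theorem~\ref{char theorems 1} also hands back property (iv), ``no element of $G$ strongly divides $\lcm(L)$,'' which is exactly hypothesis (iii) of Theorem~\ref{char theorems 2}, so the hypotheses match up cleanly; properties (i) and (ii) of the two theorems coincide verbatim. I expect no genuine obstacle here — the substance of the argument has already been carried out in Sections~4 and~5, and this theorem is essentially a corollary repackaging those results into the clean statement about $\betti_{i,l}(S/M)$. If anything, the one subtlety worth a sentence is noting that $\betti_{i,l}$ is independent of the choice of minimal resolution, so that the count performed in $\mathbb{F}$ is legitimate; this is standard, but I would mention it to keep the deduction airtight.
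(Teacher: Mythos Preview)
Your proposal is correct and follows exactly the same route as the paper's own proof: apply Theorem~\ref{char theorems 1} to produce the dominant set $L$ with $\lcm(L)=l$, then invoke Theorem~\ref{char theorems 2} to conclude uniqueness of the basis element in multidegree $l$. The paper's proof is simply the terse two-line version of what you wrote.
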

 
 \begin{proof}
 
 By Theorem \ref{char theorems 1}, there is a set $L\in \mathscr{D}_M$, such that $\lcm(L)=l$. Now, our statement follows from Theorem \ref{char theorems 2}.
\end{proof}

The next corollary gives the multigraded Betti numbers of $S/M$ in homological degree $n$.
 
 \begin{corollary}\label{main results 2}% 6.2
 For an arbitrary monomial ideal $M$ of $S$, we have that
 
 \[\betti_{n,l}(S/M)=\begin{cases}
 				1 \text{, if there is } D\in \mathscr{D}_M \text{, with }l=\lcm(D) \\
				0 \text{, otherwise.}
				\end{cases}\]
 \end{corollary}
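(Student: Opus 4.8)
The plan is to read the corollary off directly from the results already established in Sections~4 and~5, by splitting into the two cases of the piecewise formula. The substantive content has all been done: Theorem~\ref{char theorems 1} says that a homological-degree-$n$ basis element of a minimal resolution produces a member of $\mathscr{D}_M$ of the appropriate lcm, Theorem~\ref{char theorems 2} gives the reverse passage together with uniqueness of the corresponding basis element, and Theorem~\ref{main results 1} converts a homological-degree-$n$ basis element of multidegree $l$ into the value $\betti_{n,l}(S/M)=1$. So the proof is pure bookkeeping that packages these three statements into a single multigraded formula.

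For the first case, suppose there is $D\in\mathscr{D}_M$ with $l=\lcm(D)$. By definition of $\mathscr{D}_M$, the set $L:=D$ is a dominant subset of $G$ with $\#L=n$ and no generator of $G$ strongly dividing $\lcm(L)$, so hypotheses (i)--(iii) of Theorem~\ref{char theorems 2} hold. That theorem then furnishes a basis element $[\sigma]$ of the minimal resolution $\mathbb{F}$ of $S/M$ with $\hdeg[\sigma]=n$ and $\mdeg[\sigma]=\lcm(D)=l$. Applying Theorem~\ref{main results 1} to this $[\sigma]$ gives $\betti_{n,l}(S/M)=1$, as claimed. (Equivalently, the ``moreover'' clause of Theorem~\ref{char theorems 2}, stating that $[\sigma]$ is the unique basis element of $\mathbb{F}$ of multidegree $l$, already yields $\betti_{n,l}(S/M)=1$.)

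For the second case, suppose no $D\in\mathscr{D}_M$ satisfies $l=\lcm(D)$; I must show $\betti_{n,l}(S/M)=0$. I argue by contradiction: if $\betti_{n,l}(S/M)\neq 0$, then $\mathbb{F}$ contains a basis element $[\sigma]$ with $\hdeg[\sigma]=n$ and $\mdeg[\sigma]=l$. Theorem~\ref{char theorems 1} applied to $[\sigma]$ yields a subset $L\subseteq G$ that is dominant, has cardinality $n$, satisfies $\lcm(L)=\mdeg[\sigma]=l$, and admits no element of $G$ strongly dividing $\lcm(L)$ --- in other words, $L\in\mathscr{D}_M$ with $\lcm(L)=l$, contradicting the hypothesis of this case. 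Hence $\betti_{n,l}(S/M)=0$. Since the two cases are exhaustive and mutually exclusive, the formula follows.

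I do not expect any genuine obstacle here; the only point that needs a moment's care is the direction ``$\betti_{n,l}(S/M)\neq 0 \Rightarrow$ some $D\in\mathscr{D}_M$ has $\lcm(D)=l$,'' where one must invoke Theorem~\ref{char theorems 1} with the correct multidegree $\mdeg[\sigma]=l$ and check that the set $L$ it produces really lies in $\mathscr{D}_M$ --- which it does, because parts (i), (ii), (iv) of that theorem are exactly the three defining conditions of $\mathscr{D}_M$.
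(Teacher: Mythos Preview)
Your proof is correct and follows essentially the same approach as the paper: invoke Theorems~\ref{char theorems 2} and~\ref{main results 1} for the first case, and argue by contradiction using Theorem~\ref{char theorems 1} for the second. The paper's proof is slightly more terse but the logic is identical.
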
 
 
 \begin{proof}
 Suppose that there exists $L\in \mathscr{D}_M$, such that $\lcm(L)=l$. By Theorems \ref{char theorems 2} and \ref{main results 1}, $\betti_{n,l}(S/M)=1$. Now, suppose that there is no $L \in \mathscr{D}_M$, with $\lcm(L)=l$. Let us assume, by means of contradiction, that $\betti_{n,l}(S/M)\geq 1$. Then, the minimal resolution of $S/M$ must have a basis element $[\sigma]$, with $\hdeg[\sigma]=n$, $\mdeg[\sigma]=l$. By Theorem \ref{char theorems 1}, there is a dominant set 
 $L \subseteq G$, with $\# L=n$, such that no monomial of $G$ strongly divides $\lcm(L)$, and with $\lcm(L)= \mdeg[\sigma]=l$, an absurd. Thus, $\betti_{n,l}(S/M)=0$.
  \end{proof}
 
Next, we give the total and graded Betti numbers of $S/M$, in homological degree $n$.
 
 \begin{corollary}\label{main results 3} % 4.3
 Let $L=\{l: l=\lcm(D) \text{, for some } D\in \mathscr{D}_M\}$. For every $j\geq 0$, let $L_j=\{l\in L:\deg l=j\}$. Then
 \begin{enumerate}[(i)]
 \item $\betti_n(S/M)=\# L$,
 \item $\betti_{n,j}(S/M)= \# L_j$.
  \end{enumerate}
 \end{corollary}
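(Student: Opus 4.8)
The plan is to derive both formulas by summing the multigraded Betti numbers already determined in Corollary \ref{main results 2}. The two facts I would invoke are the standard identities
\[\betti_i(S/M)=\sum_{l}\betti_{i,l}(S/M)\qquad\text{and}\qquad \betti_{i,j}(S/M)=\sum_{\deg l=j}\betti_{i,l}(S/M),\]
valid for every $i\geq 0$, where $l$ ranges over all monomials. Both are immediate from the definitions of $\betti_{i,j}$ and $\betti_{i,l}$ as counts of basis elements of $F_i$ grouped respectively by degree and by multidegree.

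First I would observe that $\mathscr{D}_M$ is finite, being a collection of $n$-element subsets of the finite set $G$; hence $L=\{\lcm(D):D\in\mathscr{D}_M\}$ and each $L_j$ are finite, so the sums above have only finitely many nonzero terms and raise no convergence concern. Next I would apply Corollary \ref{main results 2}, which states that $\betti_{n,l}(S/M)$ equals $1$ precisely when $l\in L$ and equals $0$ for every other monomial $l$.

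Combining these, for (i) I would compute
\[\betti_n(S/M)=\sum_{l}\betti_{n,l}(S/M)=\sum_{l\in L}1=\#L,\]
and for (ii), restricting the sum to monomials of total degree $j$,
\[\betti_{n,j}(S/M)=\sum_{\deg l=j}\betti_{n,l}(S/M)=\sum_{l\in L_j}1=\#L_j.\]

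I do not anticipate a genuine obstacle here: all the substantive content has been absorbed into Theorems \ref{char theorems 1} and \ref{char theorems 2} and into Corollary \ref{main results 2}, so what remains is a bookkeeping argument. The only point deserving an explicit sentence is the finiteness of $L$ and the $L_j$, which is what legitimizes passing from multigraded to graded and total Betti numbers via the displayed identities.
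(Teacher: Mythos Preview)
Your proposal is correct and matches the paper's own proof almost verbatim: both arguments simply sum the multigraded Betti numbers from Corollary~\ref{main results 2} to obtain the total and graded Betti numbers. The only difference is that you make explicit the underlying identities $\betti_i=\sum_l\betti_{i,l}$ and $\betti_{i,j}=\sum_{\deg l=j}\betti_{i,l}$ and the finiteness of $L$, which the paper leaves implicit.
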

 
 \begin{proof}
 \begin{enumerate}[(i)]
 \item By Corollary \ref{main results 2}, $\betti_n (S/M)=\sum\limits_{l\in L}\betti_{n,l}(S/M)=\# L$.
 \item By Corollary \ref{main results 2}, $\betti_{n,j}(S/M)=\sum\limits_{l\in L_j}\betti_{n,l}(S/M)=\# L_j.$
\end{enumerate}
 \end{proof}
 
 \begin{example}\label{example 3} %6.4
 Let $M$ be minimally generated by $G=\{x_1^6x_2,x_1^5x_2^3,x_2^4,x_1x_3^4\}$. Of the $4$ subsets of $G$ of cardinality $3$, exactly two of them are in $\mathscr{D}_M$, namely, $L_1=\{x_1^6x_2,x_1^5x_2^3,x_1x_3^4\}$, and $L_2=\{x_1^5x_2^3,x_2^4,x_1x_3^4\}$. Since $\lcm(L_1)=x_1^6x_2^3x_3^4$, and $\lcm(L_2)=x_1^5x_2^4x_3^4$, we have $\betti_{3,x_1^6x_2^3x_3^4}(S/M)=1=\betti_{3,x_1^5x_2^4x_3^4}(S/M)$. Moreover, since $\lcm(L_1)\neq \lcm(L_2)$, $\betti_3(S/M)=2$, and since $\deg L_1=\deg L_2=13$, $\betti_{3,13}(S/M)=2$.
 \end{example}
 
 In the next corollary, we give the Betti numbers of an arbitrary monomial ideal in three variables. Trivariate monomial resolutions have been completely described [Mi,MS], but our aim is to show how easily we can obtain the Betti numbers, once we know the elements of $\mathscr{D}_M$.
 
 \begin{corollary}\label{main results 4}% 4.5
 Let $M$ be a monomial ideal in $3$ variables, minimally generated by $q$ monomials. Let $L=\{l:l=\lcm(D) \text{, for some } D\in \mathscr{D}_M\}$. Then, $\betti_0(S/M)=1$; $\betti_1(S/M)=q$; $\betti_2(S/M)=\# L+q-1$; $\betti_3(S/M)=\# L$.
 \end{corollary}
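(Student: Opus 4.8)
The plan is to establish the four equalities separately, in roughly increasing order of difficulty. The first two cost nothing: $\betti_0(S/M)=1$ because $S/M$ is a nonzero cyclic $S$-module (so $F_0=S$ in the minimal resolution), and $\betti_1(S/M)=q$ because in a minimal free resolution the differential $f_1\colon F_1\to F_0=S$ sends a basis of $F_1$ onto a minimal generating set of $\im f_1=M$, which has exactly $q$ elements by hypothesis. The equality $\betti_3(S/M)=\#L$ is also immediate: since $n=3$ here, it is precisely Corollary \ref{main results 3}(i).

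The only equality that requires an argument is $\betti_2(S/M)=\#L+q-1$, and I would deduce it by an Euler-characteristic count. By the Hilbert Syzygy theorem $\pd(S/M)\le n=3$, so $\betti_i(S/M)=0$ for $i>3$ and the minimal free resolution of $S/M$ has the form
\[0\to F_3\to F_2\to F_1\to F_0\to S/M\to 0.\]
Since we are in the nondegenerate case $M\neq 0$, the module $S/M$ is torsion over the domain $S$: picking any nonzero $f\in M$, multiplication by $f$ kills every class in $S/M$, so $(S/M)\otimes_S\operatorname{Frac}(S)=0$ and $S/M$ has rank $0$. Tensoring the resolution above with the (flat) field $\operatorname{Frac}(S)$ yields an exact sequence of finite-dimensional vector spaces whose alternating sum of dimensions vanishes, giving $\sum_{i=0}^{3}(-1)^{i}\betti_i(S/M)=\rank(S/M)=0$, that is,
\[1-q+\betti_2(S/M)-\#L=0,\]
and hence $\betti_2(S/M)=\#L+q-1$.

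I do not expect a genuine obstacle; the one thing to watch is the degenerate case, so at the outset I would restrict to $M$ a proper nonzero monomial ideal (for $M=0$ or $M=S$ the displayed formulas fail or are vacuous, and these are excluded). Beyond that, the proof is just bookkeeping layered on Corollary \ref{main results 3} together with the standard finiteness and rank properties of finite free resolutions over a polynomial ring.
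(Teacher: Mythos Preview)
Your proof is correct and follows essentially the same approach as the paper: both invoke Corollary \ref{main results 3}(i) for $\betti_3$ and then obtain $\betti_2$ from the vanishing of the Euler characteristic $\sum_i(-1)^i\betti_i(S/M)=0$. The only cosmetic difference is that the paper justifies $\betti_0=1$ and $\betti_1=q$ via the Taylor resolution (Taylor symbols in homological degree $0$ or $1$ have unique multidegree and hence survive every consecutive cancellation), whereas you use the equivalent standard facts about minimal free resolutions directly.
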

 
 \begin{proof}
 If $[\sigma]$ is a Taylor symbol of $\mathbb{T}_M$, with homological degree either 0 or 1, then the multidegree of $[\sigma]$ is not shared by any other Taylor symbol of $\mathbb{T}_M$. It follows that $[\sigma]$ is a basis element of any minimal resolution of $S/M$, obtained from $\mathbb{T}_M$ by means of consecutive cancellations. Thus, $\betti_0(S/M)=1$ and $\betti_1(S/M)=q$. That $\betti_3(S/M)=\# L$ is the content of Corollary \ref{main results 3} (i). Finally, since the Euler characteristic of $S/M$ equals 0, we have that $\betti_2(S/M)=\betti_3(S/M)+\betti_1(S/M)-\betti_0(S/M)$.
 \end{proof}
 
 \begin{example}% 6.6
 $M=(x_1^6x_2,x_1^5x_2^3,x_2^4,x_1x_3^4)$. In Example \ref{example 3}, we showed that $\betti_3(S/M)=2$. Since $M$ is minimally generated by $4$ monomials, we must have that $\betti_0(S/M)=1$; $\betti_1(S/M)=4$; and $\betti_2(S/M)=\betti_3(S/M)+\betti_1(S/M)-\betti_0(S/M)=2+4-1=5$.
 \end{example}
 
 Our next goal is to show that monomial ideals with large projective dimension satisfy the inequality $\sum \limits_{i=0}^n \betti_i(S/M)\geq 2^n$. The next lemma will be a useful tool to prove this fact.
 
 \begin{lemma}\label{lemma 1}% 6.7
 Let $M$ be minimally generated by $G$. Suppose that $G$ contains a dominant set $D$ of cardinality $n$, such that no element of $G$ strongly divides $\lcm(D)$. Let $1\leq i_1<\ldots<i_q\leq n$, where $1\leq q\leq n$. Let $\mathscr{A}$ be the class of all subsets $A$ of $G$ satisfying the following conditions:
 
 \begin{enumerate}[(i)]
 \item if $j \in \{i_1,\ldots,i_q\}$, $x_j$ appears with the same exponent in the factorizations of $\lcm(D)$ and $\lcm(A)$.
 \item if $j \notin \{i_1,\ldots,i_q\}$, $x_j$ appears with smaller exponent in the factorization of $\lcm(A)$ than in the factorization of $\lcm(D)$.
 \end{enumerate}
 Then, $\#\mathscr{A}$ is odd.
 \end{lemma}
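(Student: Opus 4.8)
The plan is to recast $\#\mathscr{A}$ as the number of sub-collections of a set system whose union is a fixed ground set, and then compute its parity by inclusion--exclusion; the point is that the two hypotheses on $D$ force that inclusion--exclusion sum to collapse modulo $2$.

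\textbf{Setup.} Since $D$ is dominant of cardinality $n$, no variable can be a dominant variable for two distinct elements of $D$, so I would write $D=\{l_1,\dots,l_n\}$ with $l_i$ dominant in $x_i$. Let $\alpha_i$ be the exponent of $x_i$ in $l_i$; then $\lcm(D)=x_1^{\alpha_1}\cdots x_n^{\alpha_n}$ with every $\alpha_i\ge 1$, and for $k\ne i$ the exponent of $x_k$ in $l_i$ is \emph{strictly} smaller than $\alpha_k$ --- otherwise $l_i$ and $l_k$ would both attain the exponent $\alpha_k$ in $x_k$, contradicting that $l_k$ is dominant in $x_k$. Write $I=\{i_1,\dots,i_q\}$ and $J=\{1,\dots,n\}\setminus I$. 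Call $m\in G$ \emph{usable} if $x_j$ occurs in $m$ with exponent at most $\alpha_j$ for every $j\in I$ and with exponent less than $\alpha_j$ for every $j\in J$; let $G'\subseteq G$ be the set of usable generators, and for $m\in G'$ let $T(m)\subseteq I$ be the set of those $j\in I$ for which $x_j$ occurs in $m$ with exponent exactly $\alpha_j$.

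\textbf{Three reductions.} Next I would prove: (a) every $A\in\mathscr{A}$ satisfies $A\subseteq G'$, and conversely, for $A\subseteq G'$ one has $A\in\mathscr{A}$ if and only if $\bigcup_{m\in A}T(m)=I$. Indeed condition (ii) of the lemma is automatic once $A\subseteq G'$, while condition (i) at an index $j\in I$ holds exactly when some $m\in A$ has $j\in T(m)$. Hence $\mathscr{A}=\{\,A\subseteq G':\bigcup_{m\in A}T(m)=I\,\}$. (b) For each $i\in I$, the generator $l_i$ is usable and $T(l_i)=\{i\}$; this is immediate from the exponent description of $l_i$ above. (c) --- the crucial step --- every $m\in G'$ has $T(m)\ne\varnothing$: since $m\in G$, it does not strongly divide $\lcm(D)=x_1^{\alpha_1}\cdots x_n^{\alpha_n}$, so some variable $x_{j_0}$ occurring in $m$ has exponent at least $\alpha_{j_0}$; usability excludes $j_0\in J$ and forces the exponent of $x_{j_0}$ in $m$ to equal $\alpha_{j_0}$, whence $j_0\in T(m)$.

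\textbf{Parity count.} Finally, inclusion--exclusion over the subsets $K$ of $I$ gives
\[
\#\mathscr{A}=\sum_{K\subseteq I}(-1)^{|I|-|K|}\,2^{\varphi(K)},\qquad \varphi(K):=\#\{\,m\in G':T(m)\subseteq K\,\},
\]
because the $A\subseteq G'$ with $\bigcup_{m\in A}T(m)\subseteq K$ are exactly the $2^{\varphi(K)}$ subsets of $\{\,m\in G':T(m)\subseteq K\,\}$. Reducing modulo $2$ kills all the signs, so $\#\mathscr{A}\equiv\#\{\,K\subseteq I:\varphi(K)=0\,\}\pmod 2$. By (b), any nonempty $K\subseteq I$ contains an index $i$ with $T(l_i)=\{i\}\subseteq K$, so $\varphi(K)\ge 1$; by (c), $\varphi(\varnothing)=0$. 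Thus $\varphi(K)=0$ for precisely one $K$, namely $K=\varnothing$, and therefore $\#\mathscr{A}$ is odd. I expect step (c) to be the only real obstacle: recognizing that usability together with the ``no generator strongly divides $\lcm(D)$'' hypothesis is precisely what guarantees $T(m)\ne\varnothing$; everything else is bookkeeping and a routine parity computation.
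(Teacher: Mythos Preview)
Your proof is correct. Both your argument and the paper's hinge on the same two facts you isolate as (b) and (c): that each $l_i$ for $i\in I$ is usable with $T(l_i)=\{i\}$, and that every usable generator hits at least one index of $I$ (the latter being precisely where the ``no generator strongly divides $\lcm(D)$'' hypothesis enters). Where the arguments diverge is in the parity computation itself. The paper partitions $\mathscr{A}$ directly: writing $B=G'$ and $L=B\setminus\{l_{i_1},\dots,l_{i_q}\}$, it groups the sets $A\in\mathscr{A}$ by their $L$-part $L'=A\cap L$, observes that each block is an interval in the Boolean lattice of size $2^{q-p}$ (where $p$ counts the indices of $I$ not already covered by $L'$), and shows that only the block over $L'=\varnothing$ has odd size. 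You instead run M\"obius inversion over subsets $K\subseteq I$ and reduce the alternating sum modulo~$2$, which kills every term except the one at $K=\varnothing$. Your route is a bit more systematic and arguably cleaner to write down; the paper's explicit partition gives slightly more structural information about $\mathscr{A}$ but requires verifying that the blocks really do tile~$\mathscr{A}$. Either way the substance is the same.
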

 
 \begin{proof}
 Let $\lcm(D)=x_1^{\alpha_1}\ldots x_n^{\alpha_n}$. Since each of the $n$ monomials of $D$ must be dominant in one of the $n$ variables of $S$, $D$ can be represented in the form $D=\{m_1,\ldots,m_n\}$, where, for all 
 $1\leq i,j \leq n$, $m_i=x_1^{\beta_{i1}}\ldots x_n^{\beta_{in}}$, and 
 
\[ \begin{cases}
\beta_{ij}=\alpha_j \text{, if } i=j\\
\beta_{ij}< \alpha_j \text{, if } i\neq j.
 \end{cases}\]
 Let $D'=\{m_{i_1},\ldots,m_{i_q}\}$, and let 
 $B=\{l \in G: l\mid \lcm(D)$ and if  $x_j$ appears with the same exponent in the factorizations of $l$ and $\lcm(D)$, then $j\in \{i_1,\ldots,i_q\}\}$. Notice that $D' \subseteq B$. \\
 Let $L=B\setminus D'$. Then $B$ can be expressed as the disjoint union $B=L \cup D'$. \\
 For each $L' \subseteq L$, let $D_{L'}$ be the smallest subset of $D'$, such that $L' \cup D_{L'} \in \mathscr{A}$. Let 
 $\mathscr{A}_{L'}=\{L' \cup H: D_{L'} \subseteq H \subseteq  D'\}$. Notice that $\mathscr{A}_{L'} \subseteq \mathscr{A}$. On the other hand, if $A \in \mathscr{A}$, then $A \subseteq B=L \cup D'$. It follows that $A$ can be expressed as the disjoint union $A=(A \cap L ) \cup (A\cap D')$.
 If we define $L_A=A \cap L$, and $H_A= A \cap D'$, then $A \in \mathscr{A}_{L_A}$. Thus, $\mathscr{A} = \bigcup\limits_{L' \subseteq L} \mathscr{A}_{L'}$. Notice that if $L'$ and $L''$ are different subsets of $L$, then the families $\mathscr{A}_{L'}$ and $\mathscr{A}_{L''}$ are disjoint. Therefore, $\# \mathscr{A}= \sum\limits_{L' \subseteq L} \# \mathscr{A}_{L'}$. We will show that  $\sum\limits_{L' \subseteq L} \# \mathscr{A}_{L'}$ is odd.\\
 Let $L'$ be an arbitrary subset of $L$, and let $p= \# D_{L'}$. Then the number of sets $H$, such that $D_{L'} \subseteq H \subseteq D'$, is $\sum\limits_{i=0}^{q-p} {q-p \choose i}=2^{q-p}$. That is, $\# \mathscr{A}_{L'}=2^{q-p}$.
 If $L'= \varnothing$, then $D_{L'}=D'$, and $p=q$. Therefore, $\# \mathscr{A}_{\varnothing}= 2^0=1$. Now, suppose that $L' \neq \varnothing$. Let $l \in L'$. Then $l$ divides $\lcm(D)$, but not strongly. This means that $l$ factors as $l= x_1^{\gamma_1} \ldots x_n^{\gamma_n}$, where $\gamma_j \leq \alpha_j$ for all $j$, and $\gamma_k = \alpha_k$ for some $1 \leq k \leq n$. Moreover, since $l \in L' \subseteq L \subseteq B$, we have that 
 $k \in \{i_1,\ldots,i_q\}$; say $k=i_r$. It follows that $m_{i_r} \notin D_{L'}$, and thus, $p= \# D_{L'} < \# D'=q$ which, in turn, implies that $\# \mathscr{A}_{L'}=2^{q-p}$ is even. Finally, $\#\mathscr{A}$ is a finite sum, all of whose terms are even, with the only exception of 
 $\# \mathscr{A}_{\varnothing}= 1$. We conclude that $\#\mathscr{A}= \sum\limits_{L' \subseteq L} \# \mathscr{A}_{L'}$ is odd.
 \end{proof}
 
 \begin{theorem}\label{theorem 2}% 4.8
 If $\pd(S/M)=n$, then $\sum \limits_{i=0}^n \betti_i(S/M)\geq 2^n$.
 \end{theorem}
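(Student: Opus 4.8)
The plan is to use Lemma \ref{lemma 1} to attach, to each of the $2^n-1$ nonempty subsets $T\subseteq\{1,\dots,n\}$, a block of multidegrees on which the total Betti number of $S/M$ is odd (hence positive), in such a way that the blocks attached to different $T$ are pairwise disjoint and none of them contains the multidegree $1$; since $\betti_{0,1}(S/M)=1$ and $\sum_i\betti_i(S/M)=\sum_{i,l}\betti_{i,l}(S/M)$ (the minimal resolution being multigraded), this produces $2^n$ disjoint contributions and gives the bound. To start, since $\pd(S/M)=n$ the minimal resolution of $S/M$ has a basis element in homological degree $n$, so Theorem \ref{char theorems 1} furnishes a dominant $L\subseteq G$ with $\#L=n$ over which no generator of $G$ strongly divides $\lcm(L)$; fix such a set $D:=L\in\mathscr{D}_M$ and write $\lcm(D)=x_1^{\alpha_1}\cdots x_n^{\alpha_n}$, where, as in the proof of Theorem \ref{char theorems 1}, every $\alpha_j\ge 1$.

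The bridge from Lemma \ref{lemma 1} (a statement about subsets of $G$) to Betti numbers is a parity count in the Taylor resolution. For a monomial $l$ set $N_l=\#\{A\subseteq G:\lcm(A)=l\}$. The multidegree-$l$ strand of $\mathbb{T}_M\otimes_S k$ is the complex of $k$-vector spaces whose term in homological degree $i$ has basis $\{[\sigma]\in\mathbb{T}_M:\hdeg[\sigma]=i,\ \mdeg[\sigma]=l\}$, because the coefficient $\lcm(\sigma)/\lcm(\sigma\setminus\{l_j\})$ of a Taylor differential maps to $0$ in $k$ unless it is a unit, which is automatic once the multidegree is pinned down. As $\mathbb{T}_M$ and the minimal resolution both resolve $S/M$, the homology of this strand in homological degree $i$ has dimension $\betti_{i,l}(S/M)$; equating the Euler characteristic of this bounded complex of finite-dimensional vector spaces with that of its homology gives
\[\sum_i(-1)^i\betti_{i,l}(S/M)=\sum_{\sigma\subseteq G,\ \lcm(\sigma)=l}(-1)^{|\sigma|},\]
and reduction mod $2$ yields $\sum_i\betti_{i,l}(S/M)\equiv N_l\pmod 2$.

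Now, for nonempty $T=\{i_1<\dots<i_q\}\subseteq\{1,\dots,n\}$, let $\mathscr{L}_T$ be the set of monomials $l$ dividing $\lcm(D)$ in which $x_j$ occurs with exponent exactly $\alpha_j$ precisely for $j\in T$. A subset $A\subseteq G$ lies in the class $\mathscr{A}$ of Lemma \ref{lemma 1} (built from $D$ and $T$) if and only if $\lcm(A)\in\mathscr{L}_T$, so
\[\#\mathscr{A}=\sum_{l\in\mathscr{L}_T}N_l\equiv\sum_{l\in\mathscr{L}_T}\sum_i\betti_{i,l}(S/M)\pmod 2;\]
by Lemma \ref{lemma 1} the left-hand side is odd, hence $\sum_{l\in\mathscr{L}_T}\sum_i\betti_{i,l}(S/M)\ge 1$. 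The sets $\mathscr{L}_T$ are pairwise disjoint, since $l$ determines $T$ as $\{j:\text{ the exponent of }x_j\text{ in }l\text{ is }\alpha_j\}$, and, because all $\alpha_j\ge 1$, the monomial $1$ lies in no $\mathscr{L}_T$ while $\sum_i\betti_{i,1}(S/M)=\betti_{0,1}(S/M)=1$. Therefore
\[\sum_{i=0}^n\betti_i(S/M)=\sum_{i,l}\betti_{i,l}(S/M)\ \ge\ \betti_{0,1}(S/M)+\!\!\sum_{\varnothing\neq T\subseteq\{1,\dots,n\}}\ \sum_{l\in\mathscr{L}_T}\sum_i\betti_{i,l}(S/M)\ \ge\ 1+(2^n-1)=2^n.\]

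The step I expect to require the most care is the parity identity $\sum_i\betti_{i,l}(S/M)\equiv N_l\pmod 2$: it is the one place that uses homological input not already isolated in the earlier sections, namely the exact description of the multidegree-$l$ part of $\mathbb{T}_M\otimes_S k$ together with the invariance of Euler characteristic under passing to homology. The remaining ingredients — extracting $D$ from the characterization, checking that the classes $\mathscr{A}$ of Lemma \ref{lemma 1} are exactly the fibers $\{A:\lcm(A)\in\mathscr{L}_T\}$, the disjointness of the $\mathscr{L}_T$, and the final arithmetic — are routine.
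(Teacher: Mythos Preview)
Your argument is correct and follows the same overall architecture as the paper's proof: extract a set $D\in\mathscr{D}_M$ via Theorem~\ref{char theorems 1}, invoke Lemma~\ref{lemma 1} to show that for each nonempty $T\subseteq\{1,\dots,n\}$ the count of Taylor symbols whose multidegree matches $\lcm(D)$ exactly on the coordinates in $T$ is odd, and then use a parity argument to conclude that each such $T$ contributes at least one unit to $\sum_i\betti_i(S/M)$, with the empty $T$ handled by $[\varnothing]$.

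The one genuine difference is in the parity bridge from Taylor symbols to Betti numbers. The paper argues combinatorially: a minimal resolution is reached from $\mathbb{T}_M$ by consecutive cancellations, each of which removes a pair of basis elements of equal multidegree and hence removes either $0$ or $2$ elements from the class $\mathscr{A}'$; since $\#\mathscr{A}'$ is odd, a survivor remains in the minimal basis, and these survivors are pairwise distinct because their multidegrees are. You instead use the homological identity $\sum_i(-1)^i\betti_{i,l}(S/M)=\sum_{A\subseteq G,\ \lcm(A)=l}(-1)^{|A|}$, obtained by computing $\operatorname{Tor}^S_\bullet(S/M,k)_l$ as the homology of the multidegree-$l$ strand of $\mathbb{T}_M\otimes_S k$ and equating Euler characteristics; reducing mod~$2$ and summing over $l\in\mathscr{L}_T$ recovers the oddness of $\#\mathscr{A}$ on the Betti side. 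Your route is slightly slicker in that it avoids the consecutive-cancellation bookkeeping entirely and uses only the standard Tor interpretation of multigraded Betti numbers; the paper's route is more self-contained within its own framework and yields an explicit basis element for each $T$ rather than merely a positive block of Betti numbers. Both lead to the same $2^n$ bound with no loss.
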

 
 \begin{proof}
 By Theorem \ref{char theorems 1}, the minimal generating set $G$ of $M$ contains a dominant set $D$ of cardinality $n$, such that no element of $G$ strongly divides $\lcm(D)$. 
 Let $1 \leq i_1<\ldots<i_q \leq n$, where $1\leq q\leq n$. Let 
 $\mathscr{A}'$ be the class of all Taylor symbols $[A]$ of $\mathbb{T}_M$ satisfying the following conditions:
 
 \begin{enumerate}[(i)]
 \item If $j \in \{i_1,\ldots,i_q\}$, $x_j$ appears with the same exponent in the factorizations of $\mdeg[D]$ and $\mdeg[A]$.
 \item If $j \notin \{i_1,\ldots,i_q\}$, $x_j$ appears with smaller exponent in the factorization of $\mdeg[A]$ than in the factorization of $\mdeg[D]$.
 \end{enumerate}
 
 Notice that there is a bijective correspondence $f$ between the subsets of $G$ and the Taylor symbols of $\mathbb{T}_M$, given by $A \leftrightarrow [A]$, where $\lcm(A)=\mdeg[A]$. It follows that the restriction 
 $f\restriction_{\mathscr{A}}$ determines a bijective correspondence between the class $\mathscr{A}$ defined in Lemma \ref{lemma 1}, and the class $\mathscr{A}'$, just introduced. By Lemma \ref{lemma 1}, we have that 
 $\#\mathscr{A}'$ is odd.\\
 Let $\mathbb{F}$ be a minimal resolution of $S/M$, obtained from $\mathbb{T}_M$ by means of consecutive cancellations. If $0\rightarrow S[A]\rightarrow S[A']\rightarrow 0$ is one such cancellation, and either $[A]$ or $[A']$ is in 
 $\mathscr{A}'$, then the other Taylor symbol must also be in $\mathscr{A}'$, for $\mdeg[A]=\mdeg[A']$. In other words, each consecutive cancellation eliminates either $0$ or $2$ Taylor symbols from $\mathscr{A}'$. Hence, after making all the consecutive cancellations that lead to $\mathbb{F}$, we will have eliminated an even number of Taylor symbols from the family $\mathscr{A}'$. Since $\mathscr{A}'$ has odd cardinality, the basis of $\mathbb{F}$ must contain at least one element of $\mathscr{A}'$.\\
 Let $\mathscr{U}$ be the class of all strictly increasing sequences $1\leq i_1<\ldots<i_q\leq n$, where $0\leq q\leq n$. Let $\mathscr{V}$ be the basis of $\mathbb{F}$. We define the application $g: \mathscr{U}\rightarrow \mathscr{V}$ as follows:
 $g(\{i_1,\ldots,i_q\})=[A_{i_1,\ldots,i_q}]$, where $[A_{i_1,\ldots,i_q}]$ is a Taylor symbol of the basis of $\mathbb{F}$ satisfying (i) and (ii).\\
 Suppose that $\{k_1,\ldots,k_s\}$ and $\{j_1,\ldots,j_r\}$ are different sequences of $\mathscr{U}$. Then there is an integer that belongs to one of the sequences but not to the other, say 
 $k_t\in \{k_1,\ldots,k_s\}\setminus \{j_1,\ldots,j_r\}$. By (i), $x_{k_t}$ appears with the same exponent in the factorizations of $\mdeg[D]$ and $\mdeg[A_{k_1,\ldots,k_s}]$. By (ii), $x_{k_t}$ appears with smaller exponent in the factorization of $\mdeg[A_{j_1,\ldots,j_r}]$ than in the factorization of $\mdeg[D]$. This means that $\mdeg[A_{k_1,\ldots,k_s}]\neq \mdeg[A_{j_1,\ldots,j_r}]$. In particular, $[A_{k_1,\ldots,k_s}]\neq [A_{j_1,\ldots,j_r}]$, and $g$ is 
 one-to-one.\\
 Notice that the number of sequences in $\mathscr{U}$ is $\# \mathscr{U}=\sum\limits_{q=0}^n {n \choose q} =2^n$. Since $g$ is one-to-one, $\sum\limits_{i=0}^n \betti_i(S/M) = \# \mathscr{V} \geq \# \mathscr{U}=2^n.$
 \end{proof}

The final main result that we intend to prove states that Artinian monomial ideals $M$ (equivalently, ideals $M$ with $\codim(S/M)=n$) for which $\betti_n(S/M)=1$, are complete intersections. The proof of this fact requires the following lemma.
 
 \begin{lemma}\label{lemma 4.9}% 6.9
 Let $M$ be minimally generated by $G=\{x_1^{\alpha_1},\ldots,x_n^{\alpha_n},l_1,\ldots,l_q\}$, where $q\geq 1$, $l_1,\ldots,l_q$ are divisible by $x_n$, and $\alpha_1,\ldots,\alpha_n\geq 1$. Then $\betti_n(S/M)\geq 2$.
 \end{lemma}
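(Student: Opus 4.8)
The plan is to split off the pure power $x_n^{\alpha_n}$ from $M$ and control the two pieces of the resulting short exact sequence. Put $M'' = (x_1^{\alpha_1},\dots,x_{n-1}^{\alpha_{n-1}},l_1,\dots,l_q)$, so that $M = M'' + (x_n^{\alpha_n})$ and there is a short exact sequence
\[0 \longrightarrow S/(M'':x_n^{\alpha_n}) \xrightarrow{\ \cdot\, x_n^{\alpha_n}\ } S/M'' \longrightarrow S/M \longrightarrow 0 .\]
Write $l_j = x_n^{c_j}m_j$ with $m_j$ a monomial in $x_1,\dots,x_{n-1}$ and $c_j\ge 1$. Minimality of $G$ forces $c_j \le \alpha_n-1$ and $m_j\neq 1$ (otherwise $l_j$ and $x_n^{\alpha_n}$ would be comparable), and also forces the $x_i$-exponent of each $l_j$ to be at most $\alpha_i-1$ for $i<n$. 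A direct computation of the colon ideal then yields
\[P := (M'':x_n^{\alpha_n}) = (x_1^{\alpha_1},\dots,x_{n-1}^{\alpha_{n-1}},m_1,\dots,m_q),\]
a proper ideal of $\tilde S := k[x_1,\dots,x_{n-1}]$. Since $P$ contains $(x_1^{\alpha_1},\dots,x_{n-1}^{\alpha_{n-1}})$, the ring $\tilde S/P$ is nonzero Artinian, so $\pd_{\tilde S}(\tilde S/P)=n-1$ and $\betti_{n-1}(S/P)=\betti_{n-1}^{\tilde S}(\tilde S/P)\ge 1$ by flat base change.

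Next I would show $\betti_n(S/M'')\ge 1$, equivalently $\pd(S/M'')=n$, equivalently (Auslander--Buchsbaum) $\operatorname{depth}(S/M'')=0$. For this, relabel so that $c_1=c:=\min_j c_j$ and consider $f = x_1^{\alpha_1-1}\cdots x_{n-1}^{\alpha_{n-1}-1}\,x_n^{c-1}$. One checks that $x_if\in(x_i^{\alpha_i})\subseteq M''$ for every $i<n$, that $x_nf$ is divisible by $l_1$ (using $m_1\mid x_1^{\alpha_1-1}\cdots x_{n-1}^{\alpha_{n-1}-1}$), and that $f\notin M''$ (no minimal generator divides it: the pure powers have too-large exponents, and every $l_j$ carries $x_n$ to the exponent $c_j\ge c>c-1$). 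Hence $(M'':f)=\mathfrak{m}$ is an associated prime of $S/M''$.

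To combine the two estimates I would pass to the long exact sequence in $\operatorname{Tor}_{\bullet}^S(-,k)$ coming from the displayed short exact sequence (equivalently, form the mapping cone of a lift of $\cdot\,x_n^{\alpha_n}$ between minimal resolutions of $S/P$ and $S/M''$). Since $\pd(S/P)=n-1$ we have $\operatorname{Tor}_n^S(S/P,k)=0$, so it suffices to prove that the map $\operatorname{Tor}_{n-1}^S(S/P,k)\to\operatorname{Tor}_{n-1}^S(S/M'',k)$ induced by multiplication by $x_n^{\alpha_n}$ is zero; the sequence then gives $\betti_n(S/M)=\betti_n(S/M'')+\betti_{n-1}(S/P)\ge 1+1=2$. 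That vanishing is a multidegree count: $\operatorname{Tor}_{n-1}^S(S/P,k)$ is concentrated in multidegrees whose $x_n$-exponent is $0$ (because $P$ is extended from $\tilde S$), multiplication by $x_n^{\alpha_n}$ pushes these to multidegrees of $x_n$-exponent $\alpha_n$, while every multidegree occurring in $\operatorname{Tor}_{n-1}^S(S/M'',k)$ divides the least common multiple of the generators of $M''$ and hence has $x_n$-exponent at most $\max_j c_j\le\alpha_n-1$.

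The step that deserves the most care is this exact additivity $\betti_n(S/M)=\betti_n(S/M'')+\betti_{n-1}(S/P)$: in resolution language one must argue that no consecutive cancellation can ever delete a basis element of homological degree $n$ of the mapping cone, and this is exactly the content of the vanishing of the connecting homomorphism above. It is precisely here that the hypothesis that every $l_j$ is divisible by $x_n$ is used — it keeps $M''$ from containing any power of $x_n$ while forcing $P$ to be free of the variable $x_n$, and everything else in the argument is bookkeeping with monomial divisibility.
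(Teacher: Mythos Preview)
Your argument is correct and stands on its own, but it takes a quite different route from the paper.  The paper proves the lemma by induction on $n$, using Corollary~\ref{main results 3}(i): at each step it explicitly exhibits two dominant subsets $D_1,D_2\subseteq G$ of size $n$ with $\lcm(D_1)\neq\lcm(D_2)$ and with no element of $G$ strongly dividing either $\lcm$, which by the paper's characterization gives two distinct contributions to $\betti_n(S/M)$.  Your proof is purely homological: you split off $x_n^{\alpha_n}$ via the colon-ideal short exact sequence, compute $\pd(S/M'')=n$ by producing an explicit socle element, observe that $P=(M'':x_n^{\alpha_n})$ lives in $n-1$ variables and is Artinian there, and then use a multidegree comparison (the $x_n$-exponent is $\alpha_n$ on one side and at most $\max_j c_j\le\alpha_n-1$ on the other) to kill the relevant map in the long exact $\operatorname{Tor}$-sequence and conclude $\betti_n(S/M)=\betti_n(S/M'')+\betti_{n-1}(S/P)\ge 2$.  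The paper's approach showcases the strength of its dominant-set description of top Betti numbers and keeps the argument internal to the combinatorial framework; your approach avoids that machinery entirely, needs no induction, and would work verbatim in any context where one only knows standard facts about multigraded resolutions (Taylor bounds on multidegrees, Auslander--Buchsbaum).  Both proofs ultimately exploit the same asymmetry --- that the extra generators $l_j$ all involve $x_n$ but with exponent strictly below $\alpha_n$ --- just packaged differently.
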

 
 \begin{proof}
 By induction on $n$. If $n=1$, there is nothing to prove.\\
 If $n=2$, $G$ must be of the form $G=\{x_1^{\alpha_1},x_2^{\alpha_2},x_1^{\beta_1}x_2^{\gamma_1},\ldots,x_1^{\beta_q}x_2^{\gamma_q}\}$, where $1\leq\beta_1<\beta_2<\ldots<\beta_q<\alpha_1$, and $1\leq\gamma_q<\gamma_{q-1}<\ldots<\gamma_1<\alpha_2$. Then $\{x_1^{\alpha_1},x_1^{\beta_q}x_2^{\gamma_q}\}$, $\{x_2^{\alpha_2},x_1^{\beta_1}x_2^{\gamma_1}\}$ are dominant sets of cardinality $2$ that are not strongly divisible by any element of $G$. Since $\lcm(x_1^{\alpha_1},x_1^{\beta_q}x_2^{\gamma_q})\neq \lcm(x_2^{\alpha_2},x_1^{\beta_1}x_2^{\gamma_1})$, it follows from Corollary \ref{main results 3} (i), that  $\betti_2\left(\dfrac{k[x_1,x_2]}{M}\right)\geq 2$.\\
 Suppose that the theorem holds for $n=k$. Let $M$ be an ideal of $k[x_1,\ldots,x_{k+1}]$ minimally generated by $G=\{x_1^{\alpha_1},\ldots,x_{k+1}^{\alpha_{k+1}},l_1,\ldots,l_q\}$, where $\alpha_1,\ldots,\alpha_{k+1}\geq 1$, 
 $q\geq 1$, and $l_1,\ldots,l_q$ are divisible by $x_{k+1}$.\\
 Let $L=\{l_i: 1 \leq i \leq q$, and $x_1 \mid l_i\}$. First, let us consider the case $L=\{l_1,\ldots,l_q\}$. Let $l\in L$ be such that exponent with which $x_1$ appears in the factorization of $l$ is less than or equal to the exponent with which $x_1$ appears in the factorization of any other monomial of $L$. Then $D_1=\{l,x_2^{\alpha_2},\ldots,x_{k+1}^{\alpha_{k+1}}\}$ is a dominant set of cardinality $k+1$ such that no monomial in $G$ strongly divides $\lcm(D_1)$. \\
 Likewise, let $l'\in L$ be such that the exponent with which $x_{k+1}$ appears in the factorization of $l'$ is less than or equal to the exponent with which $x_{k+1}$ appears in the factorization of any other monomial of $L$. Then 
 $D_2=\{x_1^{\alpha_1},\ldots,x_k^{\alpha_k},l'\}$ is a dominant set of cardinality $k+1$ such that no monomial in $G$ strongly divides $\lcm(D_2)$. \\
 Since $\lcm(D_1)\neq \lcm(D_2)$, it follows from Corollary \ref{main results 3} (i) that $\betti_{k+1}\left(\dfrac{k[x_1,\ldots,x_{k+1}]}{M}\right)\geq 2$.\\
 Now, let us consider the case $L\subsetneq \{l_1,\ldots,l_q\}$. Let $\{l_{i_1},\ldots,l_{i_r}\}$ be the set of all monomials in $\{l_1,\ldots,l_q\}$ that are not divisible by $x_1$. Let $M'$ be the ideal of $k[x_2,\ldots,x_{k+1}]$, minimally generated by $G'=\{x_2^{\alpha_2},\ldots,x_{k+1}^{\alpha_{k+1}},l_{i_1},\ldots,l_{i_r}\}$. Notice that $\alpha_2,\ldots,\alpha_{k+1}\geq 1$; $r\geq 1$, and $l_{i_1},\ldots,l_{i_r}$ are divisible by $x_{k+1}$. By induction hypothesis,
 $\betti_k\left(\dfrac{k[x_2,\ldots,x_{k+1}]}{M}\right)\geq 2$. By Corollary \ref{main results 3} (i), there are dominant subsets $E'_1, E'_2$ of $G'$ of cardinality $k$, with $\lcm(E'_1)\neq \lcm(E'_2)$, such that no monomial of $G'$ strongly divides $\lcm(E'_1)$ or $\lcm(E'_2)$.\\
 Let $L_1$ be the set of all monomials $m$ of $G$ that factor as $m=x_1^{\beta_1}\ldots x_{k+1}^{\beta_{k+1}}$, where $\beta_1 \geq 1$, and $x_2^{\beta_2}\ldots x_{k+1}^{\beta_{k+1}}$ strongly divides $\lcm(E'_1)$. (Note that $x_1^{\alpha_1}\in L_1$.) Let $\gamma_1$ be the smallest exponent with which $x_1$ appears in the factorization of any monomial of $L_1$, and let $m_1\in L_1$ be such that $x_1$ appears with exponent $\gamma_1$ in the factorization of $m_1$.
 Then $E_1=\{m_1\}\cup E'_1$ is a dominant set of cardinality $k+1$. Since no monomial of $G'$ strongly divides $\lcm(E'_1)$, it follows that no monomial of $G$ strongly divides $\lcm(E_1)$.
 Likewise, let $L_2$ be the set of all monomials $m$ of $G$ that factor as $m=x_1^{\beta_1}\ldots x_{k+1}^{\beta_{k+1}}$, where $\beta_1 \geq 1$, and $x_2^{\beta_2}\ldots x_{k+1}^{\beta_{k+1}}$ strongly divides $\lcm(E'_2)$. (Note that $x_1^{\alpha_1}\in L_2$.) Let $\gamma_2$ be the smallest exponent with which $x_1$ appears in the factorization of any monomial of $L_2$, and let $m_2\in L_2$ be such that $x_1$ appears with exponent $\gamma_2$ in the factorization of $m_2$.
 Then $E_2=\{m_2\}\cup E'_2$ is a dominant set of cardinality $k+1$. Since no monomial of $G'$ strongly divides $\lcm(E'_2)$, it follows that no monomial of $G$ strongly divides $\lcm(E_2)$. The fact that $\lcm(E'_1)\neq \lcm(E'_2)$ implies that $\lcm(E_1) \neq \lcm(E_2)$ and, by Corollary \ref{main results 3} (i), $\betti_{k+1}\left(\dfrac{k[x_1,\ldots,x_{k+1}]}{M}\right)\geq 2$.
 \end{proof}
 
 \begin{theorem}\label{theorem 6.10}
Let $\codim(S/M)=n$. If $\betti_n(S/M)=1$, then $M=(x_1^{\alpha_1},\ldots,x_n^{\alpha_n})$, for some ${\alpha_1},\ldots,{\alpha_n}\geq 1$.
 \end{theorem}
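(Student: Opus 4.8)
The plan is to establish the contrapositive of the nontrivial direction: if $M$ is Artinian (which is what $\codim(S/M)=n$ amounts to) and $M$ is \emph{not} of the form $(x_1^{\alpha_1},\ldots,x_n^{\alpha_n})$, then $\betti_n(S/M)\geq 2$. Since $M$ Artinian forces $\pd(S/M)=n$ by the Auslander--Buchsbaum formula, hence $\betti_n(S/M)\geq 1$, this yields the theorem. First I would set up the bookkeeping. Because $S/M$ is Artinian, $M$ contains a power of each variable; letting $\alpha_i$ be the least exponent with $x_i^{\alpha_i}\in M$, the monomial $x_i^{\alpha_i}$ is a minimal generator (no other monomial of $M$ divides it), so $G=\{x_1^{\alpha_1},\ldots,x_n^{\alpha_n},l_1,\ldots,l_q\}$. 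If $M$ is not the stated complete intersection then $q\geq 1$, and each $l_j$ involves at least two variables: an $l_j$ equal to a pure power $x_i^{c}$ would have $c=\alpha_i$ by minimality, so $l_j=x_i^{\alpha_i}$, already in the list.

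I would run the argument by induction on $n$; for $n=1$ there is nothing to prove, since every Artinian ideal of $k[x_1]$ is a pure power. For the inductive step, take $M$ Artinian in $S=k[x_1,\ldots,x_n]$, not a complete intersection of pure powers, and split into two cases. If some variable divides every $l_j$, rename it $x_n$: then $M$ meets the hypotheses of Lemma~\ref{lemma 4.9}, so $\betti_n(S/M)\geq 2$. Otherwise, fix any variable $x_n$ and put $B=\{l_j:x_n\nmid l_j\}$, which is nonempty since no variable divides all the $l_j$. The monomials of $B$ lie in $S^{*}:=k[x_1,\ldots,x_{n-1}]$ and each involves at least two of $x_1,\ldots,x_{n-1}$; hence $M^{*}:=(x_1^{\alpha_1},\ldots,x_{n-1}^{\alpha_{n-1}})+(B)$ is an Artinian ideal of $S^{*}$ whose minimal generating set is exactly $G^{*}=\{x_1^{\alpha_1},\ldots,x_{n-1}^{\alpha_{n-1}}\}\cup B$ (a routine check, using $M^{*}\subseteq M$ for the minimality of the exponents $\alpha_i$), and $M^{*}$ is not a complete intersection of pure powers. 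By the induction hypothesis $\betti_{n-1}(S^{*}/M^{*})\neq 1$, and $\betti_{n-1}(S^{*}/M^{*})\geq 1$ since $M^{*}$ is Artinian; so $\betti_{n-1}(S^{*}/M^{*})\geq 2$, and Corollary~\ref{main results 3}(i) supplies $E_1^{*},E_2^{*}\in\mathscr{D}_{M^{*}}$ with $\lcm(E_1^{*})\neq\lcm(E_2^{*})$.

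The remaining task---lifting $E_1^{*}$ and $E_2^{*}$ to members of $\mathscr{D}_{M}$ with distinct least common multiples---is the crux, and it reuses the mechanism from the proof of Lemma~\ref{lemma 4.9}. For $i=1,2$, let $L_i$ be the set of generators $m\in G$ such that $x_n\mid m$ and the $x_1,\ldots,x_{n-1}$-part of $m$ strongly divides $\lcm(E_i^{*})$; this set is nonempty because $x_n^{\alpha_n}\in L_i$ (its $x_1,\ldots,x_{n-1}$-part is $1$, which strongly divides everything). Let $\gamma_i$ be the smallest exponent of $x_n$ occurring in a monomial of $L_i$, choose $m_i\in L_i$ attaining it, and set $E_i=\{m_i\}\cup E_i^{*}$. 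Then $E_i$ is dominant of cardinality $n$ ($m_i$ is dominant in $x_n$, and since the $x_1,\ldots,x_{n-1}$-exponents of $m_i$ are strictly below those of $\lcm(E_i^{*})$, the dominant monomials of $E_i^{*}$ remain dominant in $E_i$); $\lcm(E_i)=x_n^{\gamma_i}\lcm(E_i^{*})$, so $\lcm(E_1)\neq\lcm(E_2)$; and no generator of $G$ strongly divides $\lcm(E_i)$, since a generator $m$ with $x_n\nmid m$ would lie in $G^{*}$ and strongly divide $\lcm(E_i^{*})$ (impossible, as $E_i^{*}\in\mathscr{D}_{M^{*}}$), while a generator $m$ with $x_n\mid m$ that strongly divided $\lcm(E_i)$ would lie in $L_i$ yet have $x_n$-exponent $<\gamma_i$, contradicting the choice of $\gamma_i$. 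Hence $E_1,E_2\in\mathscr{D}_{M}$, and Corollary~\ref{main results 3}(i) gives $\betti_n(S/M)\geq 2$, closing the induction. I expect the main obstacle to be precisely this last verification that no generator strongly divides $\lcm(E_i)$: it is here that one must adjoin the generator of \emph{minimal} $x_n$-exponent in $L_i$, rather than a careless choice such as $x_n^{\alpha_n}$, so that the strong-divisor obstruction does not resurface in passing from $n-1$ to $n$ variables.
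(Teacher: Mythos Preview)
Your proof is correct and follows essentially the same approach as the paper: prove the contrapositive by induction on $n$, invoking Lemma~\ref{lemma 4.9} when all extra generators share a common variable, and otherwise passing to the Artinian ideal in one fewer variable and lifting two elements of $\mathscr{D}_{M^{*}}$ to $\mathscr{D}_M$ by adjoining a generator of minimal $x_n$-exponent. The only cosmetic difference is that the paper fixes the variable $x_{k+1}$ first and then splits into cases according to whether $M'$ is a complete intersection, whereas you first test whether some variable divides every $l_j$; the two organizations are logically equivalent.
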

 
 \begin{proof}
 We will prove the logically equivalent statement: if $M\neq (x_1^{\alpha_1},\ldots,x_n^{\alpha_n})$, then $\betti_n(S/M) \geq 2$. (Note that $\betti_n(S/M) \neq 0$, for $\codim(S/M)=n$.)\\
 The proof is by induction on $n$. If $n=1$, the result holds trivially.\\
 Let us assume now that the theorem holds for $n=k$. \\
 Suppose that $M\neq (x_1^{\alpha_1},\ldots,x_{k+1}^{\alpha_{k+1}})$ is an ideal in $S=k[x_1,\ldots,x_{k+1}]$, with $\codim(S/M)=k+1$. Let $G$ be the minimal generating set of $M$, and let $G'$ be the set of all monomials in $G$ that are divisible by $x_{k+1}$. Since $\codim(S/M)=k+1$, $G$ must contain monomials of the form $x_1^{\alpha_1},\ldots,x_{k+1}^{\alpha_{k+1}}$ among its generators. In particular, $G'\neq \varnothing$ because $x_{k+1}^{\alpha_{k+1}}\in G'$. Let $M'$ be the ideal in $k[x_1,\ldots,x_k]$, minimally generated by $G\setminus G'$. Since $x_1^{\alpha_1},\ldots ,x_k^{\alpha_k} \in G\setminus G'$, $\codim\left(\dfrac{k[x_1,\ldots,x_k]}{M'}\right)=k$.
  If $M'=(x_1^{\alpha_1},\ldots,x_k^{\alpha_k})$, then $M$ satisfies the hypotheses of  Lemma \ref{lemma 4.9}, and thus $\betti_{k+1}(S/M)\geq 2$.\\
 Suppose now that $M'\neq (x_1^{\alpha_1},\ldots,x_k^{\alpha_k})$. By induction hypothesis, $\betti_k\left(\dfrac{k[x_1,\ldots,x_k]}{M'}\right)\geq 2$. By Corollary \ref{main results 3} (i), there are two dominant subsets $D_1$, $D_2$ of $G\setminus G'$, of cardinality $k$, with $\lcm(D_1)\neq \lcm(D_2)$, such that no monomial in $G\setminus G'$ strongly divides $\lcm(D_1)$ or $\lcm(D_2)$.\\
 Let $G'_1$ be the set of all monomials $m$ of $G'$ that factor as $m=x_1^{\beta_1}\ldots x_{k+1}^{\beta_{k+1}}$, where $x_1^{\beta_1}\ldots x_k^{\beta_k}$ strongly divides $\lcm(D_1)$. (Note that $G'_1\neq \varnothing$, for 
 $x_{k+1}^{\alpha_{k+1}}\in G'_1$.) Let $l_1\in G'_1$ be such that the exponent with which $x_{k+1}$ appears in the factorization of $l_1$ is less than or equal to the exponent with which $x_{k+1}$ appears in the factorization of any other monomial of $G'_1$. Then $D_1\cup \{l_1\}$ is a dominant set of cardinality $k+1$, that is not strongly divisible by any monomial of $G$.
 Likewise, let $G'_2$ be the set of all monomials $m$ of $G'$ that factor as $m=x_1^{\beta_1}\ldots x_{k+1}^{\beta_{k+1}}$, where $x_1^{\beta_1}\ldots x_k^{\beta_k}$ strongly divides $\lcm(D_2)$. 
 (Note that $G'_2\neq \varnothing$, for 
 $x_{k+1}^{\alpha_{k+1}}\in G'_2$.) Let $l_2\in G'_2$ be such that the exponent with which $x_{k+1}$ appears in the factorization of $l_2$ is less than or equal to the exponent with which $x_{k+1}$ appears in the factorization of any other monomial of $G'_2$. Then $D_2\cup \{l_2\}$ is a dominant set of cardinality $k+1$, that is not strongly divisible by any monomial of $G$.
  Since $\lcm(D_1)\neq \lcm(D_2)$, we must have that $\lcm(D_1\cup\{l_1\})\neq \lcm(D_2\cup\{l_2\})$ and, by Corollary \ref{main results 3} (i), $\betti_{k+1}(S/M) \geq 2$.
 \end{proof}
 
 \begin{corollary}\label{equivalent}
 Suppose that $\betti_k(S/M)=1$, for some $1\leq k\leq n$. The following statements are equivalent:
 \begin{enumerate}[(i)]
 \item $S/M$ is Gorenstein.
 \item $S/M$ is Cohen-Macaulay.
 \item $\codim(S/M)=k$.\\
 In particular, when $k=n$, the conditions above are equivalent to
 \item $M$ is of the form $M=(x_1^{\alpha_1},\ldots,x_n^{\alpha_n})$, for some ${\alpha_1},\ldots,{\alpha_n}\geq 1$.
 \end{enumerate}
 \end{corollary}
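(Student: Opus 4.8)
The plan is to derive the corollary from three ingredients, two of them standard and one elementary: (a) the observation that $\betti_k(S/M)=1$ with $k\geq 1$ forces $\pd(S/M)=k$; (b) the standard characterization that a quotient of a polynomial ring is Cohen--Macaulay if and only if its projective dimension equals its codimension (Auslander--Buchsbaum together with $\operatorname{depth}(S/M)\leq\dim(S/M)$); and (c) Peeva's criterion [Pe, Theorem 25.7], which says that among Cohen--Macaulay quotients the Gorenstein ones are exactly those whose top Betti number is $1$. For the final clause, the case $k=n$, the only additional input is Theorem \ref{theorem 6.10}, which identifies the monomial ideals with $\codim(S/M)=n$ and $\betti_n(S/M)=1$ as the ideals generated by pure powers of the variables.

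First I would prove (a). Let $\mathbb{F}\colon \cdots\to F_{k+1}\xrightarrow{d_{k+1}}F_k\xrightarrow{d_k}F_{k-1}\to\cdots\to F_0$ be the minimal free resolution of $S/M$; then $F_k$ is free of rank one, so in particular $\pd(S/M)\geq k$. Assume for contradiction that $\pd(S/M)\geq k+1$, so that $F_{k+1}\neq 0$. In the minimal resolution of a nonzero module, every differential $d_j$ with $1\leq j\leq \pd(S/M)$ is nonzero: if $j=\pd(S/M)$ this is because $d_j$ is injective with $F_j\neq 0$, and if $j<\pd(S/M)$ it is because exactness at $F_j$ together with $d_j=0$ would force $d_{j+1}$ to map onto $F_j\neq 0$, contradicting $\operatorname{im}d_{j+1}\subseteq(x_1,\ldots,x_n)F_j$. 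In particular $d_k\neq 0$; since $F_{k-1}$ is a free module over the domain $S$ and $d_k$ is a nonzero map out of the rank-one free module $F_k$, the map $d_k$ is injective, so $\ker d_k=0$. By exactness at $F_k$ this gives $\operatorname{im}d_{k+1}=0$, i.e. $d_{k+1}=0$, contradicting the previous sentence. Hence $\pd(S/M)=k$. I expect this to be the only step requiring a genuine argument; (b) and (c) are quoted.

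Assembling the equivalences is then routine, always using $\pd(S/M)=k$ from (a). The implication (i)$\Rightarrow$(ii) is the general fact that Gorenstein rings are Cohen--Macaulay. For (ii)$\Leftrightarrow$(iii): by (b), $S/M$ is Cohen--Macaulay if and only if $\pd(S/M)=\codim(S/M)$, which by (a) is the same as $\codim(S/M)=k$. For (ii)$\Rightarrow$(i): if $S/M$ is Cohen--Macaulay, then by (c) it is Gorenstein precisely because $\betti_{\pd(S/M)}(S/M)=\betti_k(S/M)=1$. Finally, for $k=n$: (iii) reads $\codim(S/M)=n$, which together with the standing hypothesis $\betti_n(S/M)=1$ yields $M=(x_1^{\alpha_1},\ldots,x_n^{\alpha_n})$ by Theorem \ref{theorem 6.10}, giving (iii)$\Rightarrow$(iv); and (iv)$\Rightarrow$(iii) holds because $S/(x_1^{\alpha_1},\ldots,x_n^{\alpha_n})$ is Artinian, hence of codimension $n$ (equivalently, (iv)$\Rightarrow$(i) directly, since pure powers of distinct variables form a regular sequence, so $S/M$ is a complete intersection). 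The one subtlety to watch is that [Pe, Theorem 25.7] requires the Cohen--Macaulay hypothesis, which is exactly what is available in the direction (ii)$\Rightarrow$(i) where it is invoked.
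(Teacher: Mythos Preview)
Your proof is correct and follows essentially the same route as the paper: both arguments first show that $\betti_k(S/M)=1$ forces $\pd(S/M)=k$ (you and the paper use the same rank-one/injectivity observation, yours just phrased as a contradiction), then invoke [Pe, Theorem 25.7] for (i)$\Leftrightarrow$(ii), Auslander--Buchsbaum for (ii)$\Leftrightarrow$(iii), and Theorem~\ref{theorem 6.10} for the case $k=n$. The only cosmetic difference is that you separate (i)$\Rightarrow$(ii) (Gorenstein $\Rightarrow$ Cohen--Macaulay) from (ii)$\Rightarrow$(i) (via Peeva's criterion), whereas the paper bundles both directions into the citation of [Pe, Theorem 25.7].
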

 
 \begin{proof}
The equivalence between (i) and (ii) is proven in [Pe, Theorem 25.7]. 
Let 
 \[\mathbb{F}: 0\rightarrow F_n\xrightarrow{f_n} F_{n-1}\cdots F_1\xrightarrow{f_1} F_0 \xrightarrow{f_0} S/M \rightarrow 0\]
 be a minimal resolution of $S/M$. (If $\pd(S/M)<n$, some of the $F_i$ will be trivial.) Since $\betti_k(S/M)=1$, the basis of $F_k$ is of the form $\{ [\sigma] \}$. Suppose that $f_k [ \sigma ] =0$. 
 Then $\ker f_{k-1} = f_k(F_k)=0$, and hence, $\pd(S/M) \leq k-1$, an absurd. Thus, we must have that $f_k [ \sigma ] \neq0$, which implies that $\ker f_k=0$. Hence, $\pd(S/M) =k$.
 From this fact, the equivalence between (ii) and (iii) is immediate.
 
 Now suppose that $k=n$.
 
 (iii) $\Rightarrow$ (iv) This part follows from Theorem \ref{theorem 6.10}.  
 
 (iv) $\Rightarrow$ (iii) This follows from the fact that $M$ is a complete intersection minimally generated by $n$ monomials.
 \end{proof}

 \textit{Note}: It is proven in [Pe,Theorem 25.6] that if $\betti_k(S/M)=1$ and $S/M$ is Gorenstein, then the Betti numbers of $S/M$ are symmetric; that is, $\betti_i(S/M)=\betti_{k-i}(S/M)$. Corollary \ref{equivalent} sheds some 
 light on the case $k=n$. Specifically, it shows that the symmetry is due to the fact that $\betti_i(S/M)=\betti_i\left(S/(x_1^{\alpha_1},\ldots,x_n^{\alpha_n})\right)={n\choose i}={n\choose {n-i}}=\betti_{n-i}\left(S/(x_1^{\alpha_1},\ldots,x_n^{\alpha_n})\right)=\betti_{n-i}(S/M)$.

 \bigskip

\noindent \textbf{Acknowledgements}: Many thanks to my dear wife Danisa for getting up very early in the morning, day after day, to type this work. I cannot express with words how much I admire her.
%%%%%%%%%%%%%%%%%%%%%%%%%%%%%%%%%%%%%%%%%%%%%%%%%%%%%%%%%%%%%%%%%%%%%%%%%%%%%%%%%%%%%%%

\end{document}